\documentclass[12pt,reqno]{amsart}

\setlength{\textheight}{23.1cm}
\setlength{\textwidth}{16cm}
\setlength{\topmargin}{-0.8cm}
\setlength{\parskip}{0.3\baselineskip}
\hoffset=-1.4cm

\usepackage{amssymb}
\usepackage[all]{xy}

\usepackage{hyperref}

\numberwithin{equation}{section}

\newtheorem{theorem}{Theorem}[section]
\newtheorem{proposition}[theorem]{Proposition}
\newtheorem{lemma}[theorem]{Lemma}
\newtheorem{corollary}[theorem]{Corollary}

\theoremstyle{definition}
\newtheorem{definition}[theorem]{Definition}
\newtheorem{remark}[theorem]{Remark}

\begin{document}

\baselineskip=15pt

\title[A criterion for holomorphic Lie algebroid connections]{A criterion for holomorphic Lie algebroid connections}

\author[D. Alfaya]{David Alfaya}

\address{Department of Applied Mathematics and Institute for Research in Technology, ICAI 
School of Engineering, Comillas Pontifical University, C/Alberto Aguilera 25, 28015 Madrid, 
Spain}

\email{dalfaya@comillas.edu}

\author[I. Biswas]{Indranil Biswas}

\address{Department of Mathematics, Shiv Nadar University, NH91, Tehsil Dadri,
Greater Noida, Uttar Pradesh 201314, India}

\email{indranil.biswas@snu.edu.in, indranil29@gmail.com}

\author[P. Kumar]{Pradip Kumar}

\address{Department of Mathematics, Shiv Nadar University, NH91, Tehsil Dadri,
Greater Noida, Uttar Pradesh 201314, India}

\email{Pradip.Kumar@snu.edu.in}

\author[A. Singh]{Anoop Singh}

\address{Department of Mathematical Sciences, Indian Institute of Technology (BHU), Varanasi 221005, India}

\email{anoopsingh.mat@iitbhu.ac.in}

\subjclass[2010]{14H60, 53D17, 53B15, 32C38}

\keywords{Atiyah bundle, split Lie algebroid, nonsplit Lie algebroid, connection}

\date{}

\begin{abstract}
Given a holomorphic Lie algebroid $(V,\, \phi)$ on a compact connected Riemann surface $X$, we give
a necessary and sufficient condition for a holomorphic vector bundle $E$ on $X$ to admit a holomorphic
Lie algebroid connection. If $(V,\, \phi)$ is nonsplit, then every holomorphic vector bundle on $X$ admits a
holomorphic Lie algebroid connection for $(V,\, \phi)$. If $(V,\, \phi)$ is split, then a holomorphic vector
bundle $E$ on $X$ admits a holomorphic Lie algebroid connection if and only if the degree of each
indecomposable component of $E$ is zero.
\end{abstract}

\maketitle

\tableofcontents

\section{Introduction}

Take a compact connected Riemann surface $X$. A holomorphic connection on a holomorphic vector bundle $E$
on $X$ is a holomorphic differential operator
$$
D\ :\ E\ \longrightarrow\ E\otimes (TX)^*
$$
such that $D(fs) \,=\, fD(s) + s\otimes df$ for all locally defined holomorphic sections $s$ of $E$
and all locally defined holomorphic functions $f$ on $X$. It is a well known fact that not every holomorphic
vector bundle admits a holomorphic connection. For any holomorphic vector bundle $E$ on $X$, consider
a holomorphic decomposition of $E$
$$
E \ =\ \bigoplus_{i=1}^\ell E_i
$$
into a direct sum of indecomposable holomorphic vector bundles. A theorem of Atiyah says that for any
other holomorphic decomposition of $E$ into a direct sum of indecomposable holomorphic vector bundles, the
isomorphism classes of the direct summands are simply a permutation of the isomorphism classes of
$E_i$, $1\, \leq\, i\, \leq\, \ell$, \cite{At1}. The holomorphic vector bundle $E$ admits a holomorphic
connection if and only if the degree of every $E_i$, $1\, \leq\, i\, \leq\, \ell$, is zero \cite{At2}, \cite{We}.

A Lie algebroid on $X$ is a holomorphic vector bundle $V\, \longrightarrow\, X$, together with an
${\mathcal O}_X$--linear homomorphism $\phi\, :\, V\, \longrightarrow\, TX$ called its anchor, and endowed with
a structure of a $\mathbb{C}$--Lie algebra on the sheaf of locally defined holomorphic sections of $V$
$$
[-, \,-] \ :\ V\otimes_{\mathbb C} V \ \longrightarrow\ V
$$
such that $[s,\, f\cdot t]\,=\, f\cdot [s,\, t]+\phi(s)(f)\cdot t$ for all locally defined
holomorphic sections $s,\, t$ of $V$ and all locally defined holomorphic functions $f$ on $X$.
Note that the pair $(TX,\, {\rm Id}_{TX})$ defines a Lie algebroid; the Lie algebra structure on
$TX$ is given by the Lie bracket operation of vector fields.

Given a Lie algebroid $(V,\, \phi)$ on $X$, a Lie algebroid connection on a holomorphic vector bundle
$E$ on $X$ for $(V,\, \phi)$ is a holomorphic differential operator
$$
D\ :\ E\ \longrightarrow\ E\otimes V^*
$$
such that $D(fs) \,=\, fD(s) + s\otimes \phi^*(df)$, where $\phi^*$ is the dual homomorphism
for $\phi$, and, as before, $s$ is any locally defined holomorphic section of $E$ and $f$ is any locally defined
holomorphic function on $X$. When $(V,\, \phi)\,=\, (TX,\, {\rm Id}_{TX})$ is the natural Lie algebroid structure on
the tangent bundle, a $(V,\, \phi)$--connection on $E$ is a holomorphic connection on $E$ in the usual sense.

It may be mentioned that by
choosing the Lie algebroid appropriately, a wide range of other algebraic and differential geometric objects
can be interpreted as Lie algebroid connections as well. For instance, Higgs bundles, \cite{Hi, Si1}, twisted Higgs
bundles, \cite{Ni1,GGPN}, logarithmic connections, \cite{De,Ni2}, meromorphic connections, \cite{Bo,BS}, and a broad
subclass of Simpson's notion of $\Lambda$-modules, \cite{Si2, To2}, can all be understood as Lie algebroid connections
for suitable choices of the Lie algebroid $(V,\, \phi)$.

Our aim here is to find a complete necessary and sufficient condition for a holomorphic vector bundle $E$ to admit
a Lie algebroid connection for a given Lie algebroid $(V,\, \phi)$.

A Lie algebroid $(V,\, \phi)$ on $X$ will be called split if there is an ${\mathcal O}_X$--linear homomorphism
$$
\gamma\, :\, TX \, \longrightarrow\, V
$$
such that $\phi\circ\gamma\,=\, {\rm Id}_{TX}$. A Lie algebroid $(V,\, \phi)$ on $X$ will
be called nonsplit if it is not split.

We prove the following (see Theorem \ref{thm1} and Proposition \ref{prop1}):

\begin{theorem}\label{thm-i}\mbox{}
\begin{itemize}
\item Let $(V,\, \phi)$ be a nonsplit Lie algebroid. Then every holomorphic vector bundle $E$ on $X$ admits a
Lie algebroid connection for $(V,\, \phi)$.

\item Let $(V,\, \phi)$ be a split Lie algebroid. Then the following two statements are equivalent:
\begin{enumerate}
\item $E$ admits a Lie algebroid connection for $(V,\, \phi)$.

\item Each indecomposable component of $E$ is of degree zero.
\end{enumerate}
\end{itemize}
\end{theorem}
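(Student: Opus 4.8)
The plan is to recast the existence of a $(V,\phi)$-connection as the vanishing of a single cohomology class, and then to analyze that class separately in the split and nonsplit cases. Recall the Atiyah exact sequence $0\to\mathrm{End}(E)\to\mathrm{At}(E)\xrightarrow{\sigma} TX\to 0$, whose extension class is the Atiyah class $\mathrm{at}(E)\in H^1(X,\mathrm{End}(E)\otimes(TX)^{*})$; an ordinary holomorphic connection is a splitting of $\sigma$, and a $(V,\phi)$-connection is exactly a lift of the anchor $\phi\colon V\to TX$ to $\mathrm{At}(E)$, i.e.\ a splitting of the pulled-back sequence $0\to\mathrm{End}(E)\to\phi^{*}\mathrm{At}(E)\to V\to 0$. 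Hence $E$ admits a $(V,\phi)$-connection if and only if $\phi^{*}\mathrm{at}(E)=0$ in $H^1(X,\mathrm{End}(E)\otimes V^{*})$, the map being induced by $\phi^{*}\colon (TX)^{*}\to V^{*}$. Since the Atiyah class of a direct sum is supported on the diagonal blocks and equals there the Atiyah classes of the summands, this obstruction is compatible with a decomposition $E=\bigoplus_i E_i$ into indecomposables, so $E$ admits a $(V,\phi)$-connection if and only if each $E_i$ does; I may therefore assume $E$ indecomposable throughout.

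For the split case I would avoid cohomology. Given $\gamma\colon TX\to V$ with $\phi\circ\gamma=\mathrm{Id}_{TX}$, composing a $(V,\phi)$-connection $D$ with $\gamma^{*}\colon V^{*}\to (TX)^{*}$ produces $\gamma^{*}\circ D\colon E\to E\otimes(TX)^{*}$ which, because $(\phi\gamma)^{*}=\mathrm{Id}$, satisfies the ordinary Leibniz rule; thus a $(V,\phi)$-connection forces an ordinary holomorphic connection, and by the theorem of Atiyah and Weil every indecomposable component of $E$ must have degree zero. Conversely, for \emph{any} $(V,\phi)$, composing an ordinary connection $\nabla$ with $\mathrm{Id}_E\otimes\phi^{*}$ yields a $(V,\phi)$-connection; so if every component of $E$ has degree zero then $E$ carries an ordinary connection and hence a $(V,\phi)$-connection. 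This settles the split statement, and also the degree-zero direction in general.

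It remains to show, for nonsplit $(V,\phi)$, that an indecomposable $E$ of nonzero degree still satisfies $\phi^{*}\mathrm{at}(E)=0$. Here I would use Serre duality: $H^1(X,\mathrm{End}(E)\otimes V^{*})$ is dual to $H^0(X,\mathrm{End}(E)\otimes V\otimes(TX)^{*})$, and the map $\phi^{*}$ on $H^1$ is dual to the map $\phi_{*}\colon H^0(X,\mathrm{End}(E)\otimes V\otimes(TX)^{*})\to H^0(X,\mathrm{End}(E))$ induced by $\phi$, so that $\phi^{*}\mathrm{at}(E)=0$ is equivalent to $\langle\mathrm{at}(E),s\rangle=0$ for every $s\in\mathrm{im}(\phi_{*})$, where $\langle-,-\rangle$ is the Serre pairing on $H^1(\mathrm{End}(E)\otimes(TX)^{*})\times H^0(\mathrm{End}(E))$. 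A direct trace computation gives $\mathrm{tr}(\phi_{*}\psi)=\phi\circ\xi\in\mathrm{End}(TX)=\mathbb{C}$, where $\xi\colon TX\to V$ is the endomorphism-trace of $\psi$; since any $\xi$ with $\phi\circ\xi\neq 0$ would rescale to a splitting of $\phi$, nonsplitness forces $\mathrm{tr}(\phi_{*}\psi)=0$. Thus every $s\in\mathrm{im}(\phi_{*})$ is trace-free, hence (as $E$ is indecomposable, so $H^0(\mathrm{End}(E))=\mathbb{C}\cdot\mathrm{Id}\oplus\{\text{nilpotents}\}$) nilpotent.

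The main obstacle is therefore the lemma $\langle\mathrm{at}(E),n\rangle=\mathrm{tr}(n\cdot\mathrm{at}(E))=0$ for every nilpotent $n\in H^0(X,\mathrm{End}(E))$, which I would deduce from the triangular structure of the Atiyah class. The saturated kernel filtration $K_j=\ker(n^{j})$ is a filtration of $E$ by holomorphic subbundles with $n(K_j)\subseteq K_{j-1}$, so with respect to the induced grading $n$ is strictly triangular while $\mathrm{at}(E)$ is triangular of the opposite, non-strict type (a holomorphic subbundle contributes no off-diagonal component of $\mathrm{at}(E)$ in the forbidden direction, and the diagonal blocks are the Atiyah classes of the graded pieces). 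Consequently the product $n\cdot\mathrm{at}(E)$ has vanishing diagonal blocks, and taking traces gives $\mathrm{tr}(n\cdot\mathrm{at}(E))=0$. Combined with the reduction above this yields $\phi^{*}\mathrm{at}(E)=0$ for all indecomposable $E$ when $(V,\phi)$ is nonsplit, completing the argument. The two delicate points to verify carefully are the adjunction/trace identity expressing $\phi^{*}\mathrm{at}(E)$ through $\phi_{*}$ and the Serre pairing, and the fact that a connection adapted to a holomorphic filtration indeed renders $\mathrm{at}(E)$ block-triangular.
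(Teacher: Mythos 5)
Your proposal is correct and takes essentially the same route as the paper: your obstruction $\phi^{*}\mathrm{at}(E)$ is exactly the paper's class $\lambda_\phi$ (which the paper constructs as the extension class of a generalized Atiyah sequence and then identifies with the image of the Atiyah class under $\phi^{*}$), and your Serre-duality argument---nonsplitness forces $\mathrm{tr}(\phi_{*}\psi)=0$ via the rescaling trick, indecomposability then makes $\phi_{*}\psi$ nilpotent, and Atiyah's triangularity lemma kills the pairing---is precisely the paper's proof of Proposition \ref{prop3}, run directly rather than by contradiction, while your split-case argument coincides with Proposition \ref{prop1}. The only (cosmetic) differences are that you realize the obstruction via the pulled-back Atiyah sequence instead of the paper's differential-operator quotient ${\mathcal A}_E$, and your formulation absorbs the $\phi=0$ case that the paper treats separately in Lemma \ref{lem2}.
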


Since any stable vector bundle is indecomposable, if $V$ is stable with ${\rm rank}(V)\, \geq\, 2$, then any Lie 
algebroid $(V,\, \phi)$ is automatically nonsplit. In \cite{BKS} it was proved that for any Lie algebroid 
$(V,\,\phi)$ where $V$ is stable with ${\rm rank}(V)\, \geq\, 2$, every holomorphic vector bundle $E$ admits a Lie 
algebroid connection for $(V,\,\phi)$. When $V$ is stable, some partial results on the existence of flat Lie 
algebroid connections on a stable vector bundle were also given earlier in \cite{AO}. On the other hand, in 
\cite{To1} \cite{To2} a notion of a generalized Atiyah class $a_V(E)\,\in\, H^1(X,\, \text{End}(E)\otimes V^*)$ for Lie 
algebroid connections was described by Tortella such that $a_V(E)\,=\,0$ if and only if the vector bundle $E$ admits
a Lie algebroid connection. The construction given here for an Atiyah exact sequence for 
Lie algebroids (see Section \ref{sec3}) differs from the one in \cite{To1} and \cite{To2}.

The paper is organized as follows. We study the problem for split and nonsplit Lie algebroids separately. Section 
\ref{sec2} introduces some general properties of Lie algebroid connections and describes a necessary and 
sufficient condition for the existence of such connections on an arbitrary vector bundle for split Lie algebroids. 
Section \ref{sec3} explores different conditions characterizing the existence of Lie algebroid connections on a 
vector bundle based on a generalization of Atiyah's exact sequence for Lie algebroids. Through these 
characterizations, in Section \ref{sec4} the existence result for connections is proved for nonsplit Lie 
algebroids. Finally, in Section \ref{sec5} some applications of the results are given when the Lie algebroid is 
related to the usual Atiyah exact sequence of a holomorphic vector bundle.

\section{Split Lie algebroids and connections}\label{sec2}

\subsection{Differential operators}

Let $X$ be a compact connected Riemann surface. The holomorphic tangent bundle
of $X$ will be denoted by $TX$, while the holomorphic cotangent bundle 
of $X$ will be denoted by $K_X$. The first jet bundle of a holomorphic vector bundle
$W$ over $X$ will be denoted by $J^1(W)$; so $J^1(W)$ is a holomorphic vector bundle with
${\rm rank}(J^1(W))\,=\, 2\cdot {\rm rank}(W)$ on $X$
that fits in the following short exact sequence of holomorphic vector bundles on $X$:
\begin{equation}\label{e0}
0\, \longrightarrow\, W\otimes K_X \, \longrightarrow\, J^1(W) \, \longrightarrow\, W
\, \longrightarrow\, 0.
\end{equation}

For a holomorphic vector bundle $F$ on $X$, the sheaf of holomorphic sections of the vector
bundle $${\rm Diff}^1(W,\, F)\ :=\ F\otimes J^1(W)^* \ =\ \text{Hom}(J^1(W),\, F)$$ is the
sheaf of holomorphic differential operators of order one from $W$ to $F$. Tensoring the dual of
\eqref{e0} with $F$ we get the short exact sequence
\begin{equation}\label{e-1}
0\, \longrightarrow\, F\otimes W^* \, \longrightarrow\, F\otimes J^1(W)^*\,=\,{\rm Diff}^1(W,\, F)
\end{equation}
$$
\stackrel{\sigma}{\longrightarrow}\, F\otimes (W\otimes K_X)^*\,=\, \text{Hom}(W,\, F)\otimes TX
\, \longrightarrow\, 0.
$$
The homomorphism $\sigma$ in \eqref{e-1} is called the \textit{symbol map}.

\subsection{Lie algebroid connections}

A $\mathbb{C}$--Lie algebra structure on a holomorphic vector
bundle $V$ on $X$ is a $\mathbb{C}$--bilinear pairing defined by a sheaf homomorphism
$$
[-,\, -] \,\,:\,\, V\otimes_{\mathbb C} V \,\, \longrightarrow\,\, V,
$$
which is given by a holomorphic homomorphism $J^1(V)\otimes J^1(V)\, \longrightarrow\, V$
of vector bundles, such that
$$[s,\, t]\,=\, -[t,\, s]\ \ \text{ and }\ \ [[s,\, t],\, u]+[[t,\, u],\, s]+[[u,\, s],\, t]\,=\,0$$
for all locally defined holomorphic sections $s,\, t,\, u$ of $V$. The Lie bracket operation
on $TX$ gives the structure of a Lie algebra on it. A Lie algebroid on $X$ is a
pair $(V,\, \phi)$, where
\begin{enumerate}
\item $V$ is a holomorphic vector bundle on $X$ equipped with the structure of a 
$\mathbb{C}$--Lie algebra, and

\item $\phi\, :\, V\, \longrightarrow\, TX$ is an ${\mathcal O}_X$--linear homomorphism such that
$$
[s,\, f\cdot t]\,=\, f\cdot [s,\, t]+\phi(s)(f)\cdot t
$$
for all locally defined holomorphic sections
$s,\, t$ of $V$ and all locally defined holomorphic functions $f$ on $X$.
\end{enumerate}
The above homomorphism $\phi$ is called the \textit{anchor map} of the Lie algebroid.

\begin{remark}
\label{rmk:anchor}
It can be shown (see, for instance \cite[Remark 2.1]{ABKS}) that for a Lie algebroid $(V,\, \phi)$ on $X$, we have
\begin{equation}\label{er}
\phi([s,\, t])\,=\, [\phi(s),\, \phi(t)]
\end{equation}
for all locally defined holomorphic sections $s,\, t$ of $V$.
\end{remark}

\begin{definition}\label{def1}
A Lie algebroid $(V,\, \phi)$ on $X$ will be called \textit{split} if there is
an ${\mathcal O}_X$--linear homomorphism
$$
\gamma\, :\, TX \, \longrightarrow\, V
$$
such that $\phi\circ\gamma\,=\, {\rm Id}_{TX}$. A Lie algebroid $(V,\, \phi)$ on $X$ will
be called \textit{nonsplit} if it is not split.
\end{definition}

Take a Lie algebroid $(V,\, \phi)$ on $X$. We have the dual homomorphism
\begin{equation}\label{e2}
\phi^*\,:\, K_X\, \longrightarrow\, V^*
\end{equation}
of the anchor map $\phi$. A \textit{Lie algebroid connection} on a holomorphic vector bundle
$E$ over $X$ is a first order holomorphic differential operator
$$
D\,\,:\,\, E\,\,\longrightarrow\, \, E\otimes V^*,
$$
meaning $D\, \in\, H^0(X, \, \text{Diff}^1(E,\, E\otimes V^*))$, such that the following
Leibniz type identity holds:
\begin{equation}\label{e-4}
D(fs) \,=\, fD(s) + s\otimes \phi^*(df)
\end{equation}
for all locally defined holomorphic sections $s$ of $E$ and all locally defined holomorphic
functions $f$ on $X$, where $\phi^*$ is the homomorphism in \eqref{e2}.

For any Lie algebroid $(V,\,\phi)$, and any $n\, \geq\, 0$, there is a $\mathbb C$--linear homomorphism
$$
d^n_V\ :\ \bigwedge\nolimits^n V^* \ \longrightarrow\ \bigwedge\nolimits^{n+1} V^*
$$
which is constructed as follows: $d_V^0(f) = \phi^*(df)$ for any locally defined holomorphic function $f$
on $X$. To construct $d^n_V$ for $n\, \geq\, 1$, take locally defined holomorphic sections $\omega \,\in\,
\bigwedge^n V^*$ and $v_1,\,\cdots,\, v_{n+1} \,\in\, V$; then
$$
d^n_V(\omega)(v_1,\, \cdots,\,v_{n+1})\ =\ \sum_{i=1}^n (-1)^{i+1}\phi(v_i)(\omega(v_1,\,\cdots,\,
\widehat{v}_i,\, \cdots,\, v_{n+1}))
$$
$$
+\sum_{1\le i<j\le n+1}(-1)^{i+j} \omega\left ([v_i,\, v_j],\, v_1,\,\cdots,\,
\widehat{v}_i,\, \cdots,\,\widehat{v}_j,\, \cdots,\, v_{n+1}\right).
$$
{}From \eqref{e-4} it follows that
$$\left (\bigwedge\nolimits^\bullet V^*,\,d_V \right) \ =\
\bigoplus_{n\geq 0} \left(\bigwedge\nolimits^n V^*,\,d^n_V\right)$$
is a differential graded complex; it
is called the Chevalley-Eilenberg-de Rham complex for $(V,\,\phi)$ (see \cite{BMRT}, \cite{LSX},
\cite{BR} for details).
Note that when $(V,\,\phi)\,=\, (TX,\,{\rm Id}_{TX})$, then $(\bigwedge\nolimits^\bullet V^*,\,d_V)$
is the holomorphic de Rham complex of $X$.

For a Lie algebroid connection $D\,:\, E\,\longrightarrow\, E\otimes V^*$, consider the
following composition of operators
$$
E\,\,\stackrel{D}{\longrightarrow}\,\, E\otimes V^* \,\,\xrightarrow{\,\,\,D\wedge {\rm Id}_{V^*}+
{\rm Id}_E\otimes d^1_V\,\,\,} \,\, E\otimes \bigwedge\nolimits^2 V^*.
$$
It is straightforward to check that this operator $E\, \longrightarrow\, E\otimes \bigwedge\nolimits^2 V^*$
is ${\mathcal O}_X$--linear, and hence it is given by a holomorphic section
$$
{\mathcal K}(D)\,\, \in\,\, H^0(X,\, \text{End}(E)\otimes \bigwedge\nolimits^2 V^*).
$$
This section ${\mathcal K}(D)$ is called the \textit{curvature} of $D$. The 
Lie algebroid connection $D$ is called \textit{integrable} (or \textit{flat}) if
we have ${\mathcal K}(D)\,=\, 0$.

Given a Lie algebroid connection $D\,:\,E\,\longrightarrow\, E\otimes V^*$, and a
holomorphic section $v\,\in\, H^0(U,\, V\big\vert_U)$ on an open subset $U\,\subset\, X$, let
\begin{equation}\label{dv}
D_v\,:\, E\big\vert_U\,\longrightarrow\, E\big\vert_U
\end{equation}
be the map that sends any $s\, \in\, H^0(U,\, E\big\vert_U)$ to the contraction
$\langle v,\, D(s)\rangle$ of $D(s)$ by $v$. Note that \eqref{e-4} is equivalent to the following:
\begin{equation}\label{dv2}
D_v (fs)\ =\ f\cdot D_v (s)+ \phi(v)(f)\cdot s
\end{equation}
for all $v\, \in\, H^0(U,\, V\big\vert_U)$, $f\, \in\, H^0(U,\, {\mathcal O}_U)$ and
$s\, \in\, H^0(U,\, E\big\vert_U)$ for every open subset $U\, \subset\, X$.
The Lie algebroid connection $D$ is flat if and only if for any pair of locally
defined sections $v,\,w \,\in\, V$,
$$D_{[v,\, w]}\ =\ [D_v,\,D_w]\ :=\ D_v\circ D_w - D_w\circ D_v .$$

Consider the very special Lie algebroid $(V,\, \phi)$, where $V\,=\, TX$ and
$\phi\,=\, {\rm Id}_{TX}$. Then a Lie algebroid connection on a holomorphic
vector bundle $E$ on $X$ is same as a holomorphic connection on $E$. (See \cite{At2}
for holomorphic connections.) Note that any holomorphic connection on $E$ is integrable
because $\bigwedge\nolimits^2 K_X\,=\, 0$.

\begin{lemma}\label{lem1}
Let $(V,\, \phi)$ be a Lie algebroid on $X$. Let $E$ be a holomorphic vector bundle
over $X$ equipped with an usual holomorphic connection $D\, :\, E\,\longrightarrow\, E\otimes K_X$.
Then $D$ induces a Lie algebroid connection on $E$ for the Lie algebroid
$(V,\, \phi)$. The induced Lie algebroid connection on $E$ is integrable.
\end{lemma}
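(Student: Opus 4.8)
The plan is to construct the induced Lie algebroid connection by pushing the ordinary connection forward along the dual anchor map, and then to deduce integrability from the bracket-compatibility of $\phi$ recorded in Remark \ref{rmk:anchor} together with the fact that ordinary holomorphic connections on a Riemann surface are automatically flat.

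First I would define the candidate operator. Recalling the dual anchor $\phi^*\,:\,K_X\,\longrightarrow\,V^*$ from \eqref{e2}, I set
$$
D^V\ :=\ ({\rm Id}_E\otimes\phi^*)\circ D\ :\ E\ \longrightarrow\ E\otimes K_X\ \longrightarrow\ E\otimes V^*.
$$
Since $D^V$ is the composition of a first order differential operator with an ${\mathcal O}_X$--linear bundle map, it is again a first order differential operator. To check that it is a Lie algebroid connection I would verify the Leibniz identity \eqref{e-4}: applying ${\rm Id}_E\otimes\phi^*$ to the ordinary Leibniz rule $D(fs)\,=\,fD(s)+s\otimes df$ immediately gives $D^V(fs)\,=\,fD^V(s)+s\otimes\phi^*(df)$, which is exactly \eqref{e-4}. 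This establishes the first assertion.

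For integrability I would use the curvature criterion stated in the excerpt, namely that $D^V$ is flat if and only if $D^V_{[v,\,w]}\,=\,[D^V_v,\,D^V_w]$ for all locally defined sections $v,\,w$ of $V$, where $D^V_v$ is the contraction defined in \eqref{dv}. The key local identity is that contracting $D^V(s)$ against $v$ coincides with contracting $D(s)$ against the vector field $\phi(v)$; indeed, by definition of $\phi^*$ one has $\langle v,\,\phi^*(\alpha)\rangle\,=\,\langle\phi(v),\,\alpha\rangle$, so that
$$
D^V_v\ =\ D_{\phi(v)},
$$
where $D_{\phi(v)}$ denotes the usual covariant derivative along $\phi(v)\in TX$.

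The last step is the curvature computation. Using the identity above together with the bracket-compatibility \eqref{er} of Remark \ref{rmk:anchor}, namely $\phi([v,\,w])\,=\,[\phi(v),\,\phi(w)]$, I obtain
$$
D^V_{[v,\,w]}\ =\ D_{\phi([v,\,w])}\ =\ D_{[\phi(v),\,\phi(w)]},
$$
while $[D^V_v,\,D^V_w]\,=\,[D_{\phi(v)},\,D_{\phi(w)}]$. Since $X$ is a curve, the ordinary connection $D$ is automatically integrable (as $\bigwedge\nolimits^2 K_X\,=\,0$), so $D_{[\xi,\,\eta]}\,=\,[D_\xi,\,D_\eta]$ for all local vector fields $\xi,\,\eta$; taking $\xi\,=\,\phi(v)$ and $\eta\,=\,\phi(w)$ yields $D^V_{[v,\,w]}\,=\,[D^V_v,\,D^V_w]$, the required flatness. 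The only genuinely substantive point, and the step I would handle most carefully, is the contraction identity $D^V_v\,=\,D_{\phi(v)}$ and the correct invocation of \eqref{er}; once these are in place, the integrability of $D^V$ reduces entirely to the automatic flatness of $D$ on the Riemann surface $X$.
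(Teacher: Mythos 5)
Your proposal is correct and follows essentially the same route as the paper's proof: compose $D$ with ${\rm Id}_E\otimes\phi^*$ to get the Lie algebroid connection, then deduce flatness from the anchor identity \eqref{er} together with the automatic integrability of $D$ (since $\bigwedge\nolimits^2 K_X\,=\,0$). Your explicit verification of the contraction identity $D^V_v\,=\,D_{\phi(v)}$ is a point the paper uses implicitly, but the argument is otherwise the same.
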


\begin{proof}
The composition of homomorphisms
$$
E\,\,\stackrel{D}{\longrightarrow}\, E\otimes K_X\,\,\xrightarrow{\,\,\, {\rm Id}_E
\otimes \phi^*\,\,\,}\,\, E\otimes V^*,
$$
where $\phi^*$ is the homomorphism in \eqref{e2} is evidently a
Lie algebroid connection on $E$ for $(V,\, \phi)$. To verify that this composition, which will
be denoted by $D'$,
is flat, let $v$ and $w$ be two locally defined holomorphic sections of $V$. Then, by \eqref{er}, we have
$\phi([v,\,w])\,=\,[\phi(v),\,\phi(w)]$, so
$$D'_{[v,w]}\, =\, D_{\phi([v,w])}\, =\, D_{[\phi(v),\phi(w)]}.$$
On the other hand, as $D$ is flat,
$$D_{[\phi(v),\phi(w)]}\, =\, [D_{\phi(v)},\, D_{\phi(w)}]\, =\, [D'_v,\, D'_w].$$
So we have $D'_{[v,w]}\, =\, [D'_v,\, D'_w]$ and, thus, $D'$ is flat.
\end{proof}

\subsection{Split Lie algebroids}

A holomorphic vector bundle $W$ on $X$ is called \textit{decomposable} if there are holomorphic vector
bundles $W_1$ and $W_2$ --- both of positive ranks --- such that $W_1\oplus W_2$ is holomorphically
isomorphic to $W$. A holomorphic vector bundle $W$ on $X$ is called \textit{indecomposable} if it
is not decomposable. Every holomorphic vector bundle is isomorphic to a direct sum of indecomposable
vector bundles. If
\begin{equation}\label{e3}
\bigoplus_{i=1}^m U_i \,=\, W \,=\, \bigoplus_{j=1}^n V_i
\end{equation}
are two holomorphic decompositions of $W$ into direct sum of indecomposable vector bundles,
then a theorem of Atiyah says the following:
\begin{enumerate}
\item $m\,=\, n$, and

\item there is a permutation $\sigma$ of $\{1,\, \cdots,\, m\}$ such that $U_i$ is holomorphically
isomorphic to $V_{\sigma (i)}$ for all $1\, \leq\, i\, \leq\, m$. 
\end{enumerate}
(See \cite[p.~315, Theorem 2(ii)]{At1}.)

A holomorphic vector bundle $U$ is called an \textit{indecomposable component} of $W$ if
\begin{itemize}
\item $U$ is indecomposable, and

\item there is a holomorphic vector bundle $U'$ such that $W\,=\, U\oplus U'$.
\end{itemize}
Note that the above result of Atiyah (\cite[p.~315, Theorem 2(ii)]{At1}) implies that $W$ has
only finitely many indecomposable components.

A theorem of Weil and Atiyah says that a holomorphic vector bundle $W$ on $X$ admits a holomorphic connection
if and only if the degree of each indecomposable component of $W$ is zero \cite[p.~203, Theorem 10]{At2},
\cite{We}.

\begin{proposition}\label{prop1}
Let $(V,\, \phi)$ be a split Lie algebroid on $X$. Let $E$ be a holomorphic vector bundle on $X$.
Then the following three statements are equivalent:
\begin{enumerate}
\item Each indecomposable component of $E$ is of degree zero.

\item $E$ admits a Lie algebroid connection for $(V,\, \phi)$.

\item $E$ admits an integrable Lie algebroid connection for $(V,\, \phi)$.
\end{enumerate}
\end{proposition}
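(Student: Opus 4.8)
The plan is to establish the cycle of implications $(1)\,\Longrightarrow\,(3)\,\Longrightarrow\,(2)\,\Longrightarrow\,(1)$, so that all three statements become equivalent. Two of these implications do not require the split hypothesis at all, and in fact hold for an arbitrary Lie algebroid; the split structure enters only in the implication $(2)\,\Longrightarrow\,(1)$.

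For $(1)\,\Longrightarrow\,(3)$, I would invoke the theorem of Weil and Atiyah recalled above: since each indecomposable component of $E$ has degree zero, $E$ carries an ordinary holomorphic connection $D_0\,:\,E\,\longrightarrow\,E\otimes K_X$. Lemma \ref{lem1} then converts $D_0$ into a Lie algebroid connection on $E$ for $(V,\,\phi)$, and guarantees that this induced connection is integrable; this is precisely statement (3). The implication $(3)\,\Longrightarrow\,(2)$ is immediate, as an integrable Lie algebroid connection is in particular a Lie algebroid connection.

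The content of the argument is the implication $(2)\,\Longrightarrow\,(1)$, and this is the only place where the split hypothesis is used. Let $\gamma\,:\,TX\,\longrightarrow\,V$ be a splitting, so that $\phi\circ\gamma\,=\,{\rm Id}_{TX}$; dualizing gives $\gamma^*\,:\,V^*\,\longrightarrow\,K_X$ with $\gamma^*\circ\phi^*\,=\,{\rm Id}_{K_X}$, where $\phi^*$ is the homomorphism in \eqref{e2}. Given a Lie algebroid connection $D\,:\,E\,\longrightarrow\,E\otimes V^*$, I would set
$$
D'\ :=\ ({\rm Id}_E\otimes\gamma^*)\circ D\ :\ E\,\longrightarrow\,E\otimes K_X.
$$
Since ${\rm Id}_E\otimes\gamma^*$ is ${\mathcal O}_X$--linear, $D'$ is again a first order differential operator, and the Leibniz identity \eqref{e-4} for $D$ yields
$$
D'(fs)\ =\ fD'(s)+s\otimes\gamma^*(\phi^*(df))\ =\ fD'(s)+s\otimes df ,
$$
using $\gamma^*\circ\phi^*\,=\,{\rm Id}_{K_X}$. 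Hence $D'$ is a genuine holomorphic connection on $E$, and the Weil--Atiyah theorem then forces each indecomposable component of $E$ to have degree zero, which is statement (1).

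I do not expect a serious obstacle here: once the dual splitting identity $\gamma^*\circ\phi^*\,=\,{\rm Id}_{K_X}$ is recorded, the proposition assembles cleanly from the Weil--Atiyah theorem and Lemma \ref{lem1}. The step carrying the genuine idea is $(2)\,\Longrightarrow\,(1)$, and the only point deserving care is the verification that $D'$ satisfies the Leibniz rule for an ordinary connection, which reduces exactly to the splitting identity. It is worth emphasizing where splitness is essential: for a nonsplit Lie algebroid the map $\gamma^*$ is unavailable, and indeed Theorem \ref{thm-i} shows that in the nonsplit case \emph{every} bundle admits a connection, so the degree-zero constraint genuinely disappears.
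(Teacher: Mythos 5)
Your proof is correct and follows essentially the same route as the paper: the cycle $(1)\Rightarrow(3)\Rightarrow(2)\Rightarrow(1)$, using the Weil--Atiyah theorem together with Lemma \ref{lem1} for $(1)\Rightarrow(3)$, and composing $D$ with ${\rm Id}_E\otimes\gamma^*$ to produce an ordinary holomorphic connection for $(2)\Rightarrow(1)$. The verification via the dual splitting identity $\gamma^*\circ\phi^*={\rm Id}_{K_X}$ is exactly the computation in the paper's proof.
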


\begin{proof} ${\bf (1) \,\implies\, (3)}$: Suppose that (1) holds. Then \cite[p.~203, Theorem 10]{At2}
says that $E$ admits a holomorphic connection. Now (3) follows from Lemma \ref{lem1}.

It is obvious that (3) implies (2).

${\bf (2) \,\implies\, (1)}$: Suppose that (2) holds. Let
$$D\,\,:\,\, E\,\,\longrightarrow\, \, E\otimes V^*$$
be a Lie algebroid connection on $E$.

Since $(V,\, \phi)$ is a split Lie algebroid, there is a homomorphism
$$
\gamma\, :\, TX \, \longrightarrow\, V
$$
such that $\phi\circ\gamma\,=\, {\rm Id}_{TX}$. Let
$$
\gamma^*\, :\, V^* \, \longrightarrow\, K_X
$$
be the dual of $\gamma$. The following composition of maps
$$
E\,\,\stackrel{D}{\longrightarrow}\, \, E\otimes V^*\,\, \xrightarrow{\,\,\, {\rm Id}_E\otimes\gamma^*\,\,\,}
\,\, E\otimes K_X
$$
will be denoted by $D'$. It is easy to see that this $D'$ is a holomorphic connection on $E$. Indeed, for
each locally defined holomorphic section $s$ of $E$ and each local holomorphic function $f$ on $X$, we have
\begin{multline*}
D'(fs) \, =\, ({\rm Id}_E\otimes \gamma^*)(D(fs))\, =\, ({\rm Id}_E\otimes \gamma^*)(fD(s)+s\otimes \phi^*(df))\\
 =\, f\cdot ({\rm Id}_E\otimes \gamma^*)(D(s)) + s\otimes \gamma^*\phi^*(df) \, =\,
fD'(s)+s\otimes (\phi\circ \gamma)^*(df)\, =\, fD'(s)+s\otimes df
\end{multline*}
(recall that $\phi\circ\gamma\,=\, {\rm Id}_{TX}$). So $D'$ is a holomorphic connection on $E$. Consequently, the
Atiyah--Weil criterion says that the degree of each indecomposable component of $E$ is zero.
\end{proof}

In Section \ref{sec4} we will prove that for a nonsplit Lie algebroid on $X$, every holomorphic vector
bundle on $X$ admits a Lie algebroid connection.

\section{Connections and Atiyah exact sequence}\label{sec3}

For a Lie algebroid $(V,\, \phi)$ on $X$, consider the homomorphism
\begin{equation}\label{e5}
\phi\otimes {\rm Id}_{V^*}\, : \, \text{End}(V)\,=\,
V\otimes V^* \, \longrightarrow\, (TX)\otimes V^*.
\end{equation}
We have ${\mathcal O}_X \,=\, {\mathcal O}_X\cdot {\rm Id}_V \, \subset\, \text{End}(V)$. Let
\begin{equation}\label{e4}
\phi_0\, :\, {\mathcal O}_X \, \longrightarrow\, (TX)\otimes V^*
\end{equation}
be the restriction of the homomorphism $\phi\otimes {\rm Id}_{V^*}$ in \eqref{e5} to
this subbundle ${\mathcal O}_X \, \subset\, \text{End}(V)$.

Take a holomorphic vector bundle $E$ on $X$. Consider the symbol map
\begin{equation}\label{e6}
\sigma\, :\, \text{Diff}^1(E,\, E\otimes V^*)\, \longrightarrow\, \text{Hom}(E,\, E\otimes V^*)\otimes TX
\,=\, \text{End}(E)\otimes (TX)\otimes V^*
\end{equation}
(see \eqref{e-1}). Denoting the constant function $1$ on $X$ by $1_X$, we have
\begin{equation}\label{e7}
\phi_0(1_X)\, \in\, H^0(X,\, (TX)\otimes V^*),
\end{equation}
where $\phi_0$ is the homomorphism in \eqref{e4}. A direct computation shows that through the canonical isomorphism
$H^0(X,\, (TX)\otimes V^*) \, = \, H^0(X,\, \text{Hom}(V,TX))$, the section $\phi_0(1_X)$
in \eqref{e7} corresponds to the anchor map $\phi$.

Taking this into account, the following lemma is a straightforward consequence of the definition of a Lie algebroid connection
on $E$.

\begin{lemma}\label{lem0}
A Lie algebroid connection $D$ on $E$ is a holomorphic differential operator
$$
D\,\, \in\,\, H^0(X,\, {\rm Diff}^1(E,\, E\otimes V^*))
$$
such that $\sigma(D)\,=\, {\rm Id}_E\otimes \phi_0(1_X) \, \in\, H^0(X,\, {\rm End}(E)\otimes (TX)\otimes V^*)$,
where $\sigma$ and $\phi_0(1_X)$ are constructed in \eqref{e6} and \eqref{e7} respectively.
\end{lemma}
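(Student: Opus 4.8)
The plan is to reduce everything to the standard description of the symbol of a first-order operator coming from the jet exact sequence \eqref{e0}, and then to match the resulting identity against the Leibniz rule \eqref{e-4}. First I would recall that, regarding a first-order differential operator $D \,\in\, H^0(X, \text{Diff}^1(E,\, E\otimes V^*)) \,=\, H^0(X, \text{Hom}(J^1(E),\, E\otimes V^*))$ as an ${\mathcal O}_X$--linear homomorphism $\widetilde{D}\,:\, J^1(E)\,\to\, E\otimes V^*$, its symbol $\sigma(D)$ in \eqref{e-1} is exactly the restriction $\widetilde{D}\big\vert_{E\otimes K_X}$ to the subbundle $E\otimes K_X\,\subset\, J^1(E)$ appearing in \eqref{e0} (the map $\sigma$ is precisely precomposition with this inclusion, obtained by dualizing \eqref{e0}). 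Since the universal first-order operator $j^1\,:\, E\,\to\, J^1(E)$ satisfies $j^1(fs)-f\cdot j^1(s)\,=\,s\otimes df$ inside $E\otimes K_X$, applying the ${\mathcal O}_X$--linear map $\widetilde{D}$ yields the identity
$$D(fs)\,=\,f\cdot D(s)+\sigma(D)(s\otimes df)$$
for all locally defined holomorphic $s$ and $f$, where on the right $\sigma(D)$ is viewed as a homomorphism $E\otimes K_X\,\to\, E\otimes V^*$ through the canonical identification $\text{End}(E)\otimes (TX)\otimes V^*\,=\,\text{Hom}(E\otimes K_X,\, E\otimes V^*)$.

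Comparing this with \eqref{e-4}, I would observe that $D$ is a Lie algebroid connection if and only if $\sigma(D)(s\otimes df)\,=\,s\otimes\phi^*(df)$ for all $s$ and $f$. The next step is to identify the right-hand side as the action of ${\rm Id}_E\otimes\phi_0(1_X)$. Using the identification of $\phi_0(1_X)$ with the anchor $\phi$ recorded just before the lemma, I would compute the contraction of $\phi_0(1_X)$ against $df\in K_X$ in a local frame $\{e_i\}$ of $V$ with dual frame $\{e_i^*\}$: writing $\phi_0(1_X)\,=\,\sum_i \phi(e_i)\otimes e_i^*$, the pairing of the $TX$--factor with $df$ gives $\sum_i (df)(\phi(e_i))\,e_i^*\,=\,\sum_i \phi(e_i)(f)\,e_i^*$, which is precisely $\phi^*(df)\in V^*$ by the definition of the dual map $\phi^*$ in \eqref{e2}. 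Hence $({\rm Id}_E\otimes\phi_0(1_X))(s\otimes df)\,=\,s\otimes\phi^*(df)$, so the symbol condition $\sigma(D)\,=\,{\rm Id}_E\otimes\phi_0(1_X)$ implies the Leibniz rule.

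For the converse I would note that the sections of the form $s\otimes df$ locally span $E\otimes K_X$, since $K_X$ is a line bundle spanned locally by $dz$ for a holomorphic coordinate $z$ while the $s$ span $E$; therefore agreement of $\sigma(D)$ and ${\rm Id}_E\otimes\phi_0(1_X)$ on all such elements forces the two homomorphisms $E\otimes K_X\,\to\, E\otimes V^*$ to coincide, that is, $\sigma(D)\,=\,{\rm Id}_E\otimes\phi_0(1_X)$ as a section of $\text{End}(E)\otimes (TX)\otimes V^*$. The only delicate point — and the one I would be most careful about — is the bookkeeping in the second paragraph: keeping the pairing conventions for $TX$ against $K_X$ consistent with the definition of $\phi^*$, so that the contraction of $\phi_0(1_X)$ genuinely reproduces $\phi^*(df)$ and not its transpose or a sign-twisted variant. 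Once that identification is pinned down, the equivalence is immediate and the lemma follows.
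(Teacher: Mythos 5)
Your proof is correct and takes essentially the same approach as the paper: the paper records (just before the lemma) that $\phi_0(1_X)$ corresponds to the anchor $\phi$ under the canonical isomorphism and then declares the lemma a straightforward consequence of the definitions, and your argument simply spells out that bookkeeping --- the jet-theoretic identification of $\sigma(D)$ with the Leibniz defect $D(fs)-fD(s)$, and the frame computation showing the contraction of $\phi_0(1_X)$ against $df$ is $\phi^*(df)$. There is nothing to correct.
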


Consider the short exact sequence
\begin{equation}\label{e8}
0\,\longrightarrow\, \text{End}(E)\otimes V^*\, \longrightarrow\,
\text{Diff}^1(E,\, E\otimes V^*)\, \stackrel{\sigma}{\longrightarrow}\,
\text{End}(E)\otimes (TX)\otimes V^*\, \longrightarrow\, 0
\end{equation}
(see \eqref{e6} and \eqref{e-1}). We also have the homomorphism
\begin{equation}\label{e10}
{\rm Id}_E\otimes \phi_0 \, :\, {\mathcal O}_X \,=\, {\rm Id}_E\otimes {\mathcal O}_X\,
\longrightarrow\, \text{End}(E)\otimes (TX)\otimes V^*,
\end{equation}
where $\phi_0$ is the homomorphism in \eqref{e4}. Using \eqref{e10}, construct the subsheaf
\begin{equation}\label{e12}
\text{Diff}^1_1(E,\, E\otimes V^*)
\ :=\ \sigma^{-1}({\rm Id}_E\otimes\phi_0 ({\mathcal O}_X))\
\subset\ \text{Diff}^1(E,\, E\otimes V^*),
\end{equation}
where $\sigma$ is the surjective homomorphism in \eqref{e8}. The restriction of $\sigma$ to
this subbundle $\text{Diff}^1_1(E,\, E\otimes V^*)\, \subset\, \text{Diff}^1(E,\, E\otimes V^*)$ will
be denoted by $\widehat\sigma$.

Lemma \ref{lem0} has the following consequence:

\begin{corollary}\label{cor1}
A Lie algebroid connection $D$ on $E$ is a holomorphic section 
$$
D'\,\, \in\,\, H^0(X,\, {\rm Diff}^1_1(E,\, E\otimes V^*)),
$$
where ${\rm Diff}^1_1(E,\, E\otimes V^*)$ is constructed in \eqref{e12}, such that the section
$$\widehat{\sigma}(D')\, \in\, H^0(X,\, {\rm End}(E)\otimes (TX)\otimes V^*),$$
where $\widehat\sigma$ is the restriction of $\sigma$ (constructed in \eqref{e8}), coincides with ${\rm Id}_E
\otimes \phi_0 (1_X)$ (see \eqref{e10}), where $1_X$, as before, is the constant function $1$ on $X$.
\end{corollary}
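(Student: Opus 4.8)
The plan is to deduce this directly from Lemma \ref{lem0} together with the definition \eqref{e12} of the subsheaf ${\rm Diff}^1_1(E,\, E\otimes V^*)$; the corollary is essentially a repackaging of the symbol condition appearing in that lemma, so no new geometric input should be needed. I would prove the two implications separately, each being a short formal manipulation.

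First I would recall that, by Lemma \ref{lem0}, a Lie algebroid connection $D$ on $E$ is precisely a section $D\,\in\, H^0(X,\, {\rm Diff}^1(E,\, E\otimes V^*))$ whose symbol equals ${\rm Id}_E\otimes\phi_0(1_X)$. The key observation is that this prescribed symbol already lies in the relevant subsheaf: since $1_X$ is a (global, hence local) section of ${\mathcal O}_X$, the section ${\rm Id}_E\otimes\phi_0(1_X)$ is by definition the image of $1_X$ under the homomorphism ${\rm Id}_E\otimes\phi_0$ in \eqref{e10}, and therefore it is a section of the subsheaf ${\rm Id}_E\otimes\phi_0({\mathcal O}_X)\,\subset\, {\rm End}(E)\otimes (TX)\otimes V^*$. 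Consequently $\sigma(D)$ takes values in ${\rm Id}_E\otimes\phi_0({\mathcal O}_X)$, which is exactly the condition for $D$ to be a section of the preimage $\sigma^{-1}({\rm Id}_E\otimes\phi_0({\mathcal O}_X))\,=\, {\rm Diff}^1_1(E,\, E\otimes V^*)$. Setting $D'\,=\, D$ and using that $\widehat\sigma$ is merely the restriction of $\sigma$ to ${\rm Diff}^1_1(E,\, E\otimes V^*)$, the symbol condition from Lemma \ref{lem0} becomes $\widehat\sigma(D')\,=\, {\rm Id}_E\otimes\phi_0(1_X)$, as required.

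For the converse direction I would start with a section $D'\,\in\, H^0(X,\, {\rm Diff}^1_1(E,\, E\otimes V^*))$ satisfying $\widehat\sigma(D')\,=\, {\rm Id}_E\otimes\phi_0(1_X)$. Composing with the inclusion ${\rm Diff}^1_1(E,\, E\otimes V^*)\,\hookrightarrow\, {\rm Diff}^1(E,\, E\otimes V^*)$, the section $D'$ becomes a genuine order-one differential operator, and since $\widehat\sigma$ is the restriction of $\sigma$ we obtain $\sigma(D')\,=\, {\rm Id}_E\otimes\phi_0(1_X)$. Lemma \ref{lem0} then identifies $D'$ as a Lie algebroid connection on $E$, completing the equivalence.

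I do not anticipate a genuine obstacle here, as the statement is a formal consequence of Lemma \ref{lem0}. The only point requiring a little care is the assertion that $\sigma(D)$ actually factors through the subsheaf ${\rm Id}_E\otimes\phi_0({\mathcal O}_X)$: this subsheaf need not be a subbundle (it can fail to be locally free at points where the anchor $\phi$ degenerates), so one should argue at the level of sheaves rather than of fibres. Since $\sigma(D)$ is literally equal to the image of the global section $1_X$ under ${\rm Id}_E\otimes\phi_0$, it lies in the image subsheaf by construction, and taking the sheaf-theoretic preimage under $\sigma$ then places $D$ in ${\rm Diff}^1_1(E,\, E\otimes V^*)$ without any local-freeness hypothesis.
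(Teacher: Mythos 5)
Your proposal is correct and follows essentially the same route as the paper: both directions are deduced directly from Lemma \ref{lem0}, with the forward direction resting on the observation that the prescribed symbol ${\rm Id}_E\otimes\phi_0(1_X)$ lies in the subsheaf ${\rm Id}_E\otimes\phi_0({\mathcal O}_X)$, so that $D$ lands in the preimage ${\rm Diff}^1_1(E,\, E\otimes V^*)$. Your added remark about arguing at the level of sheaves rather than fibres merely makes explicit what the paper's ``we clearly have'' leaves implicit.
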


\begin{proof}
If $D\, \in\, H^0(X,\, {\rm Diff}^1(E,\, E\otimes V^*))$ gives a Lie algebroid connection on $E$ (see
Lemma \ref{lem0}), then we clearly have
$$
D\, \in\, H^0(X,\, \text{Diff}^1_1(E,\, E\otimes V^*))\,\subset\,
H^0(X,\, {\rm Diff}^1(E,\, E\otimes V^*)).
$$
The condition that $\widehat{\sigma}(D)\,=\, {\rm Id}_E\otimes \phi_0 (1_X)$ is satisfied by Lemma \ref{lem0}.

Conversely, any $D'\, \in\, H^0(X,\, \text{Diff}^1_1(E,\, E\otimes V^*))$, with $\widehat{\sigma}(D')\,=\,
{\rm Id}_E\otimes \phi_0 (1_X)$, gives a Lie algebroid connection on $E$ by Lemma \ref{lem0}.
\end{proof}

Note that the homomorphism ${\rm Id}_E\otimes \phi_0$ in \eqref{e10} is nonzero if and only if the homomorphism of
coherent analytic sheaves given by ${\rm Id}_E\otimes \phi_0$ is injective. Clearly,
the homomorphism ${\rm Id}_E\otimes \phi_0$ is nonzero if and only if we have $\phi\, \not=\, 0$.

\begin{lemma}\label{lem2}
Let $(V,\, \phi)$ be a Lie algebroid on $X$ with $\phi\,=\, 0$. Then any holomorphic vector bundle $E$
on $X$ admits an integrable Lie algebroid connection for $(V,\, \phi)$.
\end{lemma}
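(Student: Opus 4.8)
The key observation is that the hypothesis $\phi\,=\,0$ forces the dual homomorphism $\phi^*\,:\,K_X\,\longrightarrow\,V^*$ of \eqref{e2} to vanish identically. The plan is to exploit this to show that the Leibniz identity \eqref{e-4} degenerates, so that the zero operator becomes a valid connection.

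First I would record that for a putative Lie algebroid connection $D\,:\,E\,\longrightarrow\,E\otimes V^*$, the defining identity \eqref{e-4} reads $D(fs)\,=\,fD(s)+s\otimes\phi^*(df)$. Since $\phi^*\,=\,0$, the inhomogeneous term disappears and the identity collapses to $D(fs)\,=\,fD(s)$; in other words, every Lie algebroid connection for $(V,\,\phi)$ is automatically ${\mathcal O}_X$--linear. The same computation, read in reverse, shows that the zero homomorphism $D\,=\,0\,:\,E\,\longrightarrow\,E\otimes V^*$ satisfies \eqref{e-4} trivially (both sides vanish), and hence qualifies as a Lie algebroid connection on $E$. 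Equivalently, one may invoke Lemma \ref{lem0}: the required condition $\sigma(D)\,=\,{\rm Id}_E\otimes\phi_0(1_X)$ becomes $\sigma(D)\,=\,0$, because $\phi_0(1_X)$ corresponds to the anchor map $\phi\,=\,0$ and so is itself the zero section; and indeed the symbol of the zero operator is $\sigma(0)\,=\,0$.

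It then remains to verify that this connection $D\,=\,0$ is integrable. For this I would use the characterization of flatness by $D_{[v,\,w]}\,=\,[D_v,\,D_w]$ for all locally defined holomorphic sections $v,\,w$ of $V$. For $D\,=\,0$, the contraction operator $D_v$ of \eqref{dv} is identically zero for every $v$, so both $D_{[v,\,w]}$ and $[D_v,\,D_w]\,=\,D_v\circ D_w-D_w\circ D_v$ vanish; hence ${\mathcal K}(0)\,=\,0$. Alternatively, since the first arrow $E\,\stackrel{D}{\longrightarrow}\,E\otimes V^*$ in the construction of the curvature is the zero map, the entire composition defining ${\mathcal K}(D)$ is automatically zero.

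There is no genuine obstacle in this argument: once one notices that $\phi\,=\,0$ annihilates the inhomogeneous term $s\otimes\phi^*(df)$ in the Leibniz rule, the zero operator furnishes the desired Lie algebroid connection on an arbitrary $E$, and its flatness is immediate. The only point requiring care is the bookkeeping that $D\,=\,0$ really does satisfy the symbol condition of Lemma \ref{lem0}, which hinges on $\phi_0(1_X)\,=\,0$ when $\phi\,=\,0$.
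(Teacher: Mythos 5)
Your proposal is correct and follows essentially the same route as the paper: both observe that $\phi\,=\,0$ kills the inhomogeneous term $s\otimes\phi^*(df)$ in \eqref{e-4}, so that Lie algebroid connections are just ${\mathcal O}_X$--linear maps (i.e.\ sections of $\mathrm{End}(E)\otimes V^*$), and then take the zero section, whose curvature vanishes trivially. The extra verifications you include (the symbol condition of Lemma \ref{lem0} and the flatness criterion via $D_v$) are fine but not needed beyond what the paper already records.
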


\begin{proof}
Since $\phi\,=\, 0$, a Lie algebroid connection on $E$ for $(V,\, \phi)$ is same as a holomorphic section
$$\theta\, \,\in\,\, H^0(X,\, \text{End}(E)\otimes V^*).$$
In particular, $\theta \,=\, 0$ defines a Lie algebroid connection on $E$ for $(V,\, \phi)$. For $\theta \,=\, 0$, we have
$${\mathcal K}(\theta) \ =\ 0,$$
and hence the corresponding Lie algebroid connection is integrable.
\end{proof}

Let $(V,\, \phi)$ be a Lie algebroid on $X$ with $\phi\,\not=\, 0$. So the homomorphism of coherent analytic sheaves
${\rm Id}_E\otimes \phi_0$ in \eqref{e10} is injective. We will identify ${\mathcal O}_X$ with its image ${\rm Id}_E
\otimes\phi_0 ({\mathcal O}_X)\, \subset\, \text{End}(E)\otimes (TX)\otimes V^*$ using ${\rm Id}_E\otimes \phi_0$.
Consequently, from \eqref{e8} and \eqref{e12} we have the exact sequence
\begin{equation}\label{e9}
0\,\longrightarrow\, \text{End}(E)\otimes V^*\, \longrightarrow\,
\text{Diff}^1_1(E,\, E\otimes V^*)\, \stackrel{\widehat{\sigma}}{\longrightarrow}\,
{\mathcal O}_X \, \longrightarrow\, 0
\end{equation}
where $\widehat{\sigma}$ as before is the restriction of $\sigma$ in \eqref{e8}.
Tensoring \eqref{e9} with $V$ we get the exact sequence
\begin{equation}\label{e11}
0\ \longrightarrow\ \text{End}(E)\otimes V^*\otimes V\ =\ \text{End}(E)\otimes \text{End}(V)
\end{equation}
$$
\stackrel{\iota}{\longrightarrow}\ \text{Diff}^1_1(E,\, E\otimes V^*)\otimes V \
\xrightarrow{\,\,\,\,\widehat{\sigma}\otimes {\rm Id}_V\,\,\,\,}\
V \ \longrightarrow\ 0.
$$
Let $\text{End}^0(V)\, \subset\, \text{End}(V)$ be the sheaf of endomorphisms of $V$ of trace zero. So we have
\begin{equation}\label{e16}
\text{End}(V)\, =\, \text{End}^0(V)\oplus {\mathcal O}_X\cdot {\rm Id}_V\,=\, \text{End}^0(V)\oplus {\mathcal O}_X. 
\end{equation}
Quotienting \eqref{e11} by $\text{End}(E)\otimes \text{End}^0(V)$ we have the following exact sequence:
\begin{equation}\label{e13}
0\ \longrightarrow\ (\text{End}(E)\otimes \text{End}(V))/(\text{End}(E)\otimes \text{End}^0(V))
\ = \ \text{End}(E)
\end{equation}
$$
\longrightarrow\, {\mathcal A}_E\ :=\
(\text{Diff}^1_1(E,\, E\otimes V^*)\otimes V)/\iota(\text{End}(E)\otimes \text{End}^0(V))
\ \xrightarrow{\,\,\,\rho\,\,\,}\ V \ \longrightarrow\ 0,
$$
where $\iota$ is the homomorphism in \eqref{e11} and $\rho$ is given by $\widehat{\sigma}\otimes {\rm Id}_V$
in \eqref{e11}; note that \eqref{e16} is used in the above identification of $(\text{End}(E)\otimes
\text{End}(V))/(\text{End}(E)\otimes \text{End}^0(V))$ with $\text{End}(E)$.

Consider the following exact sequence (see \eqref{e-1}):
\begin{equation}\label{e14}
0\, \longrightarrow\, \text{End}(E) \, \longrightarrow\, {\rm Diff}^1(E,\, E)
\, \stackrel{\sigma_0}{\longrightarrow}\, \text{End}(E)\otimes TX
\, \longrightarrow\, 0.
\end{equation}
We recall that the Atiyah bundle ${\rm At}(E)$ for $E$ is
$$
{\rm At}(E)\ :=\ \sigma^{-1}_0({\rm Id}_E\otimes TX)\ \subset\ {\rm Diff}^1(E,\, E)
$$
(see \cite{At2}). So \eqref{e14} gives the exact sequence
\begin{equation}\label{e15}
0\, \longrightarrow\, \text{End}(E) \, \longrightarrow\, {\rm At}(E)
\, \stackrel{\sigma_0}{\longrightarrow}\, {\rm Id}_E\otimes TX\,=\, TX
\, \longrightarrow\, 0,
\end{equation}
which is known as the Atiyah exact sequence for $E$. A holomorphic connection $E$ is a holomorphic homomorphism
$h\, :\, TX\, \longrightarrow\, {\rm At}(E)$ such that $\sigma_0\circ h\, =\, {\rm Id}_{TX}$, where $\sigma_0$
is the homomorphism in \eqref{e15} (see \cite{At2}); in fact this is a reformulation of Corollary
\ref{cor1} for $(V,\, \phi)\,=\, (TX,\, {\rm Id}_{TX})$.

The vector bundle ${\rm At}(E)$ has a natural Lie algebroid structure, where the Lie bracket operation is
the commutator of differential operators, and the anchor map is $\sigma_0$ in \eqref{e15}.

\begin{proposition}\label{prop2}
There is a natural homomorphism $\varphi\, :\,{\mathcal A}_E \, \longrightarrow\, {\rm At}(E)$ 
such that the following diagram is commutative:
$$
\begin{matrix}
0 &\longrightarrow & {\rm End}(E) & \longrightarrow & {\mathcal A}_E &
\xrightarrow{\,\,\,\rho\,\,\,} & V & \longrightarrow & 0\\
&& \Big\Vert &&\,\,\, \Big\downarrow\varphi &&\,\,\, \Big\downarrow \phi\\
0 & \longrightarrow & {\rm End}(E) & \longrightarrow & {\rm At}(E)
& \stackrel{\sigma_0}{\longrightarrow} & TX & \longrightarrow & 0
\end{matrix}
$$
(see the exact sequences in \eqref{e13} and \eqref{e15}).
\end{proposition}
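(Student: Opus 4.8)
The plan is to construct $\varphi$ explicitly as a contraction map and then verify the two squares by separate computations of its symbol and its trace. First I would define, on the sheaf ${\rm Diff}^1_1(E,\, E\otimes V^*)\otimes V$, the ${\mathcal O}_X$--linear contraction
$$
\Psi\,:\, {\rm Diff}^1_1(E,\, E\otimes V^*)\otimes V\,\longrightarrow\, {\rm Diff}^1(E,\, E),\qquad \Psi(D\otimes v)\,=\, D_v,
$$
where $D_v(s)\,=\, \langle v,\, D(s)\rangle$ is the contraction introduced in \eqref{dv}. The first task is to check that $\Psi$ actually takes values in ${\rm At}(E)$. For a local section $D\, \in\, {\rm Diff}^1_1(E,\, E\otimes V^*)$ with $\widehat{\sigma}(D)\,=\, g$, the identity $\sigma(D)\,=\, {\rm Id}_E\otimes\phi_0(g)$ yields $D(fs)\,=\, fD(s)+g\cdot s\otimes\phi^*(df)$, and contracting with $v$ gives
$$
D_v(fs)\,=\, f\, D_v(s)+g\cdot\phi(v)(f)\cdot s.
$$
Hence $D_v$ is a first order operator with scalar symbol $\sigma_0(D_v)\,=\, {\rm Id}_E\otimes g\,\phi(v)$, so indeed $D_v\, \in\, {\rm At}(E)$ and $\sigma_0(D_v)\,=\, \widehat{\sigma}(D)\cdot\phi(v)$.

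Next I would show that $\Psi$ descends to ${\mathcal A}_E$. The key computation is that the composition $\Psi\circ\iota\,:\, \text{End}(E)\otimes\text{End}(V)\,\longrightarrow\, {\rm At}(E)$ equals ${\rm Id}_{\text{End}(E)}\otimes\text{tr}$, where $\text{tr}\,:\, \text{End}(V)\,=\, V\otimes V^*\,\longrightarrow\, {\mathcal O}_X$ is the trace. Indeed, contracting the order zero operator associated to $A\otimes\xi$, with $A\, \in\, \text{End}(E)$ and $\xi\, \in\, V^*$, against $v\, \in\, V$ produces $\xi(v)\cdot A$, and $\xi(v)$ is exactly the trace of the endomorphism $v\otimes\xi\, \in\, \text{End}(V)$. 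Since the trace vanishes on $\text{End}^0(V)$, the map $\Psi$ annihilates $\iota(\text{End}(E)\otimes\text{End}^0(V))$ and therefore factors through a homomorphism $\varphi\,:\, {\mathcal A}_E\,\longrightarrow\, {\rm At}(E)$.

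It then remains to verify the two squares. The right square is immediate from the symbol computation above: for the class of $D\otimes v$ one has $\sigma_0(\varphi(D\otimes v))\,=\, \widehat{\sigma}(D)\,\phi(v)\,=\, \phi(\widehat{\sigma}(D)\,v)\,=\, \phi(\rho(D\otimes v))$, which is precisely $\sigma_0\circ\varphi\,=\, \phi\circ\rho$. For the left square I would reuse the same trace computation: under the identification of the kernel $\text{End}(E)$ in \eqref{e13} coming from \eqref{e16}, the class $\iota(A\otimes{\rm Id}_V)$ represents the section $A$, and $\Psi(\iota(A\otimes{\rm Id}_V))\,=\, \text{tr}({\rm Id}_V)\cdot A\,=\, {\rm rank}(V)\cdot A$. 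Thus, once the identification of $\text{End}(V)/\text{End}^0(V)$ with ${\mathcal O}_X$ is taken to be the one induced by the trace, $\varphi$ restricts to ${\rm Id}_{\text{End}(E)}$ on the kernel.

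I expect this last normalization to be the point requiring the most care: the contraction computes the \emph{full} trace on the $\text{End}(V)$ factor, so one must make sure that the isomorphism $\text{End}(V)/\text{End}^0(V)\,\cong\, {\mathcal O}_X$ used to identify the kernel in \eqref{e13} is compatible with this trace, so that $\varphi$ restricts to the identity and not to a nonzero multiple of it. It is reassuring that a single trace computation governs both the descent of $\Psi$ to ${\mathcal A}_E$ and the commutativity of the left square. Structurally, the existence of $\varphi$ can also be read as the assertion that the extension ${\mathcal A}_E$ of $V$ by $\text{End}(E)$ is the pullback $\phi^*{\rm At}(E)$ of the Atiyah sequence \eqref{e15} along the anchor $\phi$, with $\varphi$ the tautological map $\phi^*{\rm At}(E)\,\longrightarrow\, {\rm At}(E)$; from this viewpoint the commutativity of both squares is automatic once the identification ${\mathcal A}_E\,\cong\, \phi^*{\rm At}(E)$ has been established, which is exactly what the contraction $\Psi$ provides.
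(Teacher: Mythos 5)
Your construction is correct and is essentially the paper's own proof: under the identification ${\rm Diff}^1(E,\,E\otimes V^*)\otimes V \,=\, {\rm Diff}^1(E,\,E)\otimes {\rm End}(V)$, your contraction $\Psi(D\otimes v)\,=\,D_v$ is literally the trace projection $p\,=\,{\rm Id}_{{\rm Diff}^1(E,E)}\otimes {\rm tr}$ that the paper uses, and both arguments descend to ${\mathcal A}_E$ by the same vanishing of ${\rm tr}$ on ${\rm End}^0(V)$. Your explicit verification of the two squares --- in particular the observation that the identification ${\rm End}(V)/{\rm End}^0(V)\,\cong\,{\mathcal O}_X$ in \eqref{e13} must be the one induced by the trace (rather than $f\cdot{\rm Id}_V\,\leftrightarrow\, f$) for the left square to commute exactly rather than up to the factor ${\rm rank}(V)$ --- is carried out more carefully than in the paper, which asserts commutativity directly from the construction.
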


\begin{proof}
Consider the subsheaf
$$
\text{Diff}^1_1(E,\, E\otimes V^*)\otimes V\, \subset\, \text{Diff}^1(E,\, E\otimes V^*)\otimes V
\, =\, \text{Diff}^1(E,\, E)\otimes V^*\otimes V
$$
$$
\,=\, \text{Diff}^1(E,\, E)\otimes \text{End}(V)
$$
(see \eqref{e12} and \eqref{e-1}). The trace map $\text{tr}\, : \, \text{End}(V)\,\longrightarrow\, {\mathcal O}_X$, which coincides with the projection constructed
using the decomposition in \eqref{e16}, produces a projection
\begin{equation}\label{e17}
\text{Diff}^1_1(E,\, E\otimes V^*)\otimes V\, \hookrightarrow\, \text{Diff}^1(E,\, E)\otimes \text{End}(V)
\, \stackrel{p}{\longrightarrow}\, \text{Diff}^1(E,\, E)\otimes {\mathcal O}_X\,=\, \text{Diff}^1(E,\, E).
\end{equation}
The homomorphism $\sigma$ in \eqref{e8} produces a homomorphism
$$
\widetilde{\sigma}\, \,:\,\, \text{Diff}^1(E,\, E)\otimes \text{End}(V)\,\,=\,\,
\text{Diff}^1(E,\, E\otimes V^*)\otimes V
$$
$$
\xrightarrow{\,\,\, \sigma\otimes {\rm Id}_V
\,\,\,}\,\, \text{End}(E)\otimes (TX)\otimes V^*\otimes V\,\,=\,\, \text{End}(E)\otimes (TX)\otimes \text{End}(V).
$$
The projection $p={\rm Id}_{\text{Diff}^1(E,\, E)}\otimes \text{tr}$ in \eqref{e17} fits in the following commutative diagram:
\begin{equation}\label{e18}
\begin{matrix}
\text{Diff}^1(E,\, E)\otimes \text{End}(V) & \stackrel{p}{\longrightarrow} &\text{Diff}^1(E,\, E)\\
\,\,\, \,\Big\downarrow \widetilde{\sigma} &&\,\,\,\,\Big\downarrow\sigma_0\\
\text{End}(E)\otimes (TX)\otimes \text{End}(V) & \longrightarrow & \text{End}(E)\otimes (TX)
\end{matrix}
\end{equation}
where $\sigma_0$ is the projection in \eqref{e14} and the above
homomorphism
$$\text{End}(E)\otimes (TX)\otimes \text{End}(V) \,
\longrightarrow\, \text{End}(E)\otimes (TX)$$
is ${\rm Id}_{\text{End}(E)\otimes TX}$ tensored
with the trace map $\text{tr}\, : \, \text{End}(V)\,\longrightarrow\, {\mathcal O}_X$. From \eqref{e18} it follows immediately that
\begin{equation}\label{e18b}
p(\text{Diff}^1_1(E,\, E\otimes V^*)\otimes V)\ \subset \ {\rm At}(E)\ \subset\ \text{Diff}^1(E,\, E).
\end{equation}

Next observe that $(p\circ\iota) (\text{End}(E)\otimes \text{End}^0(V))\,=\, 0$, where
$\iota$ is the homomorphism in \eqref{e11}. Consequently, from \eqref{e18b} and the definition of
${\mathcal A}_E$ in \eqref{e13} we conclude that $p$ produces an homomorphism
$$
\varphi\, :\,{\mathcal A}_E \, \longrightarrow\, {\rm At}(E)
$$ 
{}From the construction of $\varphi$ it follows immediately that the diagram in the proposition is actually commutative.
\end{proof}

\begin{remark}\label{rem2}
When $V\,=\, TX$ and $\phi\,=\, {\rm Id}_{TX}$, then from the commutative diagram in Proposition
\ref{prop2} it follows immediately that $\varphi$ is an isomorphism, because in that case the other
two vertical maps in the diagram are isomorphisms. This can also be seen directly.
\end{remark}

\begin{lemma}\label{lem3}
As before, $(V,\, \phi)$ is a Lie algebroid with $\phi\, \not=\, 0$. Giving a Lie algebroid connection
on a holomorphic vector bundle $E$ on $X$ for $(V,\, \phi)$ is equivalent to giving a homomorphism
$$
\delta\ :\ V\ \longrightarrow\ {\mathcal A}_E
$$
such that $\rho\circ\delta \,=\, {\rm Id}_V$, where $\rho$ is the homomorphism in \eqref{e13}.
\end{lemma}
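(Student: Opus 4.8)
The plan is to use Corollary \ref{cor1} to reformulate a Lie algebroid connection as a global section $D \,\in\, H^0(X,\, \text{Diff}^1_1(E,\, E\otimes V^*))$ satisfying $\widehat{\sigma}(D) \,=\, 1_X$ (here we use that $\phi\,\neq\,0$, so that ${\rm Id}_E\otimes\phi_0(1_X)$ is identified with the constant function $1_X$ generating the copy of ${\mathcal O}_X$ in \eqref{e9}). First I would construct the forward map, sending such a $D$ to a splitting $\delta$ of \eqref{e13}. The section $D$ determines the ${\mathcal O}_X$--linear homomorphism $s_D\,:\,{\mathcal O}_X\,\longrightarrow\,\text{Diff}^1_1(E,\, E\otimes V^*)$, $f\,\longmapsto\, fD$, which splits \eqref{e9} because $\widehat\sigma(D)\,=\,1_X$. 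Tensoring $s_D$ with ${\rm Id}_V$ and composing with the quotient homomorphism $q\,:\,\text{Diff}^1_1(E,\, E\otimes V^*)\otimes V\,\longrightarrow\,{\mathcal A}_E$ from \eqref{e13} produces $\delta_D\,:=\, q\circ(s_D\otimes {\rm Id}_V)\,:\, V\,\longrightarrow\,{\mathcal A}_E$. Since $\rho$ is induced by $\widehat\sigma\otimes{\rm Id}_V$ (see \eqref{e11} and \eqref{e13}), so that $\rho\circ q\,=\,\widehat\sigma\otimes{\rm Id}_V$, the computation $\rho\circ\delta_D\,=\,(\widehat\sigma\otimes{\rm Id}_V)\circ(s_D\otimes{\rm Id}_V)\,=\, {\rm Id}_V$ is immediate, so $\delta_D$ is a splitting of $\rho$.

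For the converse, given a splitting $\delta\,:\,V\,\longrightarrow\,{\mathcal A}_E$ of $\rho$, I would compose it with the homomorphism $\varphi\,:\,{\mathcal A}_E\,\longrightarrow\,{\rm At}(E)$ of Proposition \ref{prop2}. The commutativity of the diagram there gives $\sigma_0\circ(\varphi\circ\delta)\,=\,\phi\circ\rho\circ\delta\,=\,\phi$, so $\psi\,:=\,\varphi\circ\delta\,:\, V\,\longrightarrow\,{\rm At}(E)$ is an ${\mathcal O}_X$--linear lift of the anchor $\phi$ to the Atiyah bundle. Such a lift is exactly a Lie algebroid connection: for a local section $v$ of $V$, the operator $\psi(v)\,\in\,{\rm At}(E)\,\subset\,\text{Diff}^1(E,\, E)$ has symbol $\phi(v)\cdot{\rm Id}_E$ by \eqref{e15}, hence $\psi(v)(fs)\,=\, f\psi(v)(s)+\phi(v)(f)s$; defining $D^\delta$ by $\langle v,\, D^\delta(s)\rangle\,=\,\psi(v)(s)$ and using the ${\mathcal O}_X$--linearity of $\psi$ in $v$ yields an operator $D^\delta\,:\,E\,\longrightarrow\, E\otimes V^*$ satisfying the Leibniz identity \eqref{e-4}, as one checks by contracting with $v$ and recalling $\langle v,\,\phi^*(df)\rangle\,=\,\phi(v)(f)$.

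Finally I would verify that these two constructions are mutually inverse. Tracing through the identification $\text{Diff}^1_1(E,\, E\otimes V^*)\otimes V\,\subset\,\text{Diff}^1(E,\, E)\otimes\text{End}(V)$ used in the proof of Proposition \ref{prop2}, where $\varphi$ descends from the projection $p\,=\,{\rm Id}\otimes\text{tr}$, a direct computation shows $\varphi(\delta_D(v))\,=\, p(D\otimes v)\,=\, D_v$ (cf. \eqref{dv}); hence $\varphi\circ\delta_D$ is the connection determined by $D$, giving $D^{\delta_D}\,=\, D$. For the other composite, both $\delta_{D^\delta}$ and $\delta$ split $\rho$, so their difference maps $V$ into $\ker\rho\,=\,{\rm End}(E)$; since $\varphi$ restricts to the identity on this copy of ${\rm End}(E)$ (the left-hand square of the diagram in Proposition \ref{prop2}) and $\varphi(\delta_{D^\delta}(v))\,=\, D^\delta_v\,=\,\varphi(\delta(v))$, this difference vanishes, so $\delta_{D^\delta}\,=\,\delta$. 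The main obstacle is precisely this bookkeeping: getting the several identifications of $\text{Diff}^1_1(E,\, E\otimes V^*)\otimes V$ with a subsheaf of $\text{Diff}^1(E,\, E)\otimes\text{End}(V)$ to line up so that $p(D\otimes v)\,=\, D_v$, and confirming that $\varphi$ is the identity on the kernel ${\rm End}(E)$; once these are in place the equivalence follows formally.
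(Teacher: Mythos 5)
Your proof is correct and takes essentially the same route as the paper's: the forward direction via the homomorphism $f \longmapsto f\cdot D$ tensored with ${\rm Id}_V$ and composed with the quotient map onto ${\mathcal A}_E$, and the converse by composing a splitting $\delta$ with the map $\varphi$ of Proposition \ref{prop2} and extracting the Leibniz identity from $\sigma_0\circ\varphi\circ\delta \,=\, \phi\circ\rho\circ\delta\,=\,\phi$. Your closing verification that the two constructions are mutually inverse (using $p(D\otimes v)\,=\,D_v$ and the fact that $\varphi$ restricts to the identity on $\ker\rho\,=\,{\rm End}(E)$) is a correct refinement that the paper omits, as it only exhibits the two directions of the correspondence.
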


\begin{proof}
Take a Lie algebroid connection on $E$. It gives a section $D'\, \in\, H^0(X,\, {\rm Diff}^1_1(E,\, E\otimes V^*))$
such that $\widehat{\sigma}(D')\, =\, {\rm Id}_E\otimes \phi_0 (1_X)$ (see Corollary \ref{cor1}). Let
$$
D''\, :\, {\mathcal O}_X \, \longrightarrow\, {\rm Diff}^1_1(E,\, E\otimes V^*)
$$
be the homomorphism defined by $f\, \longmapsto\, f\cdot D'$. Consider the homomorphism
$$
D''\otimes {\rm Id}_V\, :\, {\mathcal O}_X\otimes V\,=\, V
\, \longrightarrow\, {\rm Diff}^1_1(E,\, E\otimes V^*)\otimes V.
$$
Denote by $\delta$ the composition of homomorphisms
\begin{equation}\label{f1}
V\ \xrightarrow{\,\,\, D''\otimes {\rm Id}_V\,\,\,}\ {\rm Diff}^1_1(E,\, E\otimes V^*)\otimes V
\end{equation}
$$
\xrightarrow{\,\,\,\, q\,\,\,\,}\ (\text{Diff}^1_1(E,\, E\otimes V^*)\otimes V)/\iota(\text{End}(E)\otimes
\text{End}^0(V))\ =\ {\mathcal A}_E,
$$
where $q$ is the natural quotient map (see \eqref{e13}). Then using the given condition that
$\widehat{\sigma}(D')\, =\, {\rm Id}_E\otimes \phi_0 (1_X)$ it follows that $\rho\circ\delta \,=\, {\rm Id}_V$,
where $\rho$ is the homomorphism in \eqref{e13} and $\delta$ is the composition of maps in \eqref{f1}.

To prove the converse, take a homomorphism
$$
\delta\ :\ V\ \longrightarrow\ {\mathcal A}_E
$$
such that
\begin{equation}\label{el}
\rho\circ\delta \ =\ {\rm Id}_V.
\end{equation}
Consider the map $D'\, = \, \varphi \circ \delta \, : \, V \longrightarrow {\rm At}(E) \subset {\rm Diff}^1(E,E)$,
where $\varphi$ is constructed in Proposition \ref{prop2}. From \eqref{el} it follows that
$$
D'(v) (fs)\ =\ f\cdot D'(v) (s)+ \phi(v)(f)\cdot s
$$
for all $v\, \in\, H^0(U,\, V\big\vert_U)$, $f\, \in\, H^0(U,\, {\mathcal O}_U)$ and
$s\, \in\, H^0(U,\, E\big\vert_U)$ for every open subset $U\, \subset\, X$ (see \eqref{dv2}).
Consequently, the differential operator $$D\ :\ E\ \longrightarrow\ E\otimes V^*$$
defined by $\langle v,\, D(s)\rangle \,=\, D'(v)(s)$, where $\langle v,\, -\rangle$ denotes the
contraction by $v$, satisfies \eqref{e-4}. Thus $D$ is a Lie algebroid connection on $E$ for $(V,\, \phi)$.
\end{proof}

\begin{remark}
In his thesis \cite{To1, To2}, Tortella described a notion of a generalized Atiyah bundle for Lie algebroids
through a different approach. He showed that when the sheaf $\mathcal{J}_V^1(E)\, :=\, V^* \otimes E \oplus E$
is given the following $\mathcal{O}_X$-module structure
$$f\cdot (\alpha\otimes s \oplus s')\, := \, ( f\alpha\otimes s + \phi^*(df)\otimes s) \oplus (fs')$$
for each locally defined holomorphic function $f$ of $X$, each pair of local sections $s$ and $s'$ of $E$
and each local section $\alpha\in V^*$, then there exists a short exact sequence
$$0\,\longrightarrow\, V^*\otimes E \,\longrightarrow\,\mathcal{J}_V^1(E)\,\longrightarrow\,E\,\longrightarrow\, 0$$
such that its splittings are also in correspondence with Lie algebroid connections on $E$.
\end{remark}

Let
\begin{equation}\label{elc}
D\,:\, E\,\longrightarrow\, E\otimes V
\end{equation}
be a Lie algebroid connection on $E$ for $(V,\, \phi)$. Consider the homomorphism
$$
\delta\ :\ V\ \longrightarrow\ {\mathcal A}_E
$$
corresponding to $D$ given by Lemma \ref{lem3}. Take holomorphic sections $v$ and $w$ of
$V$ defined on an open subset $U\, \subset\, X$. We have a holomorphic vector field
$$
[\sigma_0\circ \varphi\circ\delta(v),\, \sigma_0\circ \varphi\circ\delta(w)]-
\sigma_0\circ \varphi\circ\delta ([v,\, w])\ \in\ H^0(U,\, TU)
$$
(see the diagram in Proposition \ref{prop2} for $\sigma_0$ and $\varphi$). From the commutativity of the
diagram in Proposition \ref{prop2} we know that $$\sigma_0\circ \varphi\circ\delta\,=\, 
\phi\circ\rho\circ \delta\,=\, \phi ,$$
and hence from \eqref{er} it follows that
$$[\sigma_0\circ \varphi\circ\delta(v),\, \sigma_0\circ \varphi\circ\delta(w)]- \sigma_0\circ
\varphi\circ\delta ([v,\, w])\,=\, [\phi(v),\, \phi(w)] - \phi ([v,\, w])\, =\, 0.$$
Therefore, by \eqref{er}, we have
$$\sigma_0\left([\varphi\circ\delta(v),\, \varphi\circ\delta(w)]- 
\varphi\circ\delta ([v,\, w]) \right)\, =\, [\sigma_0\circ \varphi\circ\delta(v),\, \sigma_0\circ \varphi\circ\delta(w)]- \sigma_0\circ
\varphi\circ\delta ([v,\, w])\, =\, 0.$$
Thus, from
\eqref{e15} we have
$$
[\varphi\circ\delta(v),\, \varphi\circ\delta(w)] - \varphi\circ\delta ([v,\, w])\ \in\ H^0(U,\, \ker(\sigma_0)) \, = \, H^0(U,\, \text{End}(E)).
$$
It is straightforward to check that $$[\varphi\circ\delta(f_1v),\, \varphi\circ\delta(f_2w)]
- \varphi\circ\delta( [f_1v,\, f_2w])
\,=\, f_1f_2 \left ([\varphi\circ\delta(v),\, \varphi\circ\delta(w)]- \varphi\circ\delta([v,\, w]) \right)$$
for any holomorphic functions $f_1,\, f_2$ on $U$. Also, we have
$$
[\varphi\circ\delta(w),\, \varphi\circ\delta(v)]- \varphi\circ\delta([w,\, v])
\, = \, -([\varphi\circ\delta(v),\, \varphi\circ\delta(w)]- \varphi\circ\delta([v,\, w])). 
$$
These together imply that the above map
$$
(v,\, w)\ \longmapsto\ [\varphi\circ\delta(v),\, \varphi\circ\delta(w)] - \varphi\circ\delta([v,\, w])
$$
produces a holomorphic section
\begin{equation}\label{th}
\Theta\ \in\ H^0(X,\, \text{End}(E)\otimes \bigwedge\nolimits^2 V^*).
\end{equation}

The following lemma is straightforward.

\begin{lemma}\label{ll}
The curvature ${\mathcal K}(D)$ of the Lie algebroid connection $D$ in \eqref{elc} is the section $\Theta$ constructed in \eqref{th}.
\end{lemma}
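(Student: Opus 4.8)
The plan is to identify both sections $\mathcal{K}(D)$ and $\Theta$ with the same local expression $[D_v,\, D_w] - D_{[v,\, w]}$, where $D_v$ is the first-order operator introduced in \eqref{dv}, and thereby conclude that they agree as sections of $\mathrm{End}(E)\otimes\bigwedge^2 V^*$.

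First I would unwind the homomorphism $\varphi\circ\delta$. By the construction of $\delta$ in the proof of Lemma \ref{lem3}, together with the defining relation $\langle v,\, D(s)\rangle = D'(v)(s)$ for $D' = \varphi\circ\delta$, the map $\varphi\circ\delta\,:\, V\,\longrightarrow\,\mathrm{At}(E)\,\subset\,\mathrm{Diff}^1(E,\, E)$ sends a local section $v$ of $V$ precisely to the operator $D_v$ of \eqref{dv}; indeed the Leibniz identity \eqref{dv2} is exactly what was checked there. Since the Lie bracket on $\mathrm{At}(E)$ is the commutator of differential operators, the local expression $(v,\, w)\,\longmapsto\, [\varphi\circ\delta(v),\, \varphi\circ\delta(w)] - \varphi\circ\delta([v,\, w])$ that defines $\Theta$ in \eqref{th} becomes
$$\Theta(v,\, w)\ =\ [D_v,\, D_w] - D_{[v,\, w]}\ =\ D_v\circ D_w - D_w\circ D_v - D_{[v,\, w]}.$$

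Next I would compute $\mathcal{K}(D)$ directly from its definition as the section induced by $(D\wedge\mathrm{Id}_{V^*} + \mathrm{Id}_E\otimes d^1_V)\circ D$, contracted against a pair of local sections $v,\, w$ of $V$. Writing $D(s)$ locally and applying the $n=1$ case of the Chevalley--Eilenberg differential, namely $d^1_V(\alpha)(v,\, w) = \phi(v)(\alpha(w)) - \phi(w)(\alpha(v)) - \alpha([v,\, w])$, together with repeated use of \eqref{dv2}, the contraction $\langle v\wedge w,\, \mathcal{K}(D)\rangle$ collapses to $D_v D_w(s) - D_w D_v(s) - D_{[v,\, w]}(s)$. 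This matches the expression obtained above for $\Theta(v,\, w)$, so the two sections coincide.

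The routine but delicate part is this second computation: one must track the sign conventions in the wedge $D\wedge\mathrm{Id}_{V^*}$, in the evaluation of a $2$-form on $(v,\, w)$, and in $d^1_V$, and verify that the terms involving $\phi(v)(\alpha(w))$ and $\phi(w)(\alpha(v))$ cancel exactly against the Leibniz contributions produced by $D_v\circ D_w$ and $D_w\circ D_v$, leaving only the commutator term and the $-D_{[v,\, w]}$ term coming from $d^1_V(\alpha)([v,\, w])$. Once the identification $\varphi\circ\delta(v) = D_v$ is in place, no further structural input is required; the lemma reduces to the bookkeeping verification that both constructions encode the standard curvature $[D_v,\, D_w] - D_{[v,\, w]}$, which is consistent with the flatness criterion $D_{[v,\, w]} = [D_v,\, D_w]$ recorded earlier.
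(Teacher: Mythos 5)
Your proposal is correct and is precisely the verification the paper leaves implicit: the paper states this lemma without proof (calling it straightforward), and the intended argument is exactly yours, namely that $\varphi\circ\delta(v)$ acts on sections as the contraction $D_v$ of \eqref{dv}, so that both $\Theta$ and ${\mathcal K}(D)$ reduce to the local expression $[D_v,\,D_w]-D_{[v,\,w]}$, consistently with the flatness criterion recorded in Section~\ref{sec2}. No gap: your identification of $\varphi\circ\delta(v)$ with $D_v$ is justified by the trace computation $p(D\otimes v)=\sum_k \alpha_k(v)P_k$ (writing $D=\sum_k P_k\otimes\alpha_k$ locally), which matches the relation $\langle v,\,D(s)\rangle=(\varphi\circ\delta)(v)(s)$ already used in the proof of Lemma~\ref{lem3}.
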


\section{Nonsplit Lie algebroids and connections}\label{sec4}

Fix a Lie algebroid $(V,\, \phi)$ on $X$ such that $\phi\, \not=\, 0$. Take a holomorphic vector bundle $E$ on $X$.
Note that
$$
\text{End}(E)^*\,=\, (E\otimes E^*)^* \,=\, E^*\otimes E \,=\, E\otimes E^*
\,=\, \text{End}(E);
$$
this isomorphism is given by trace pairing $A\otimes B\, \longmapsto\, \text{trace}(AB)$ on $\text{End}(E)$.
Consider the Atiyah exact sequence for $E$ (see \eqref{e15}). Let
\begin{equation}\label{e21b}
\lambda\ \in\ H^1(X,\, \text{End}(E)\otimes K_X)
\end{equation}
be the extension class for the Atiyah exact sequence for $E$. Using Serre duality, and the fact that $\text{End}(E)^*\,
=\, \text{End}(E)$, we have
\begin{equation}\label{e21}
\lambda\ \in\ H^1(X,\, \text{End}(E)\otimes K_X) \ =\ H^0(X,\, \text{End}(E))^*\ = \ \text{Hom}(H^0(X,\, \text{End}(E)),
\, {\mathbb C}).
\end{equation}

Next consider the exact sequence in \eqref{e13}. Let
\begin{equation}\label{e22}
\lambda_\phi\ \in\ H^1(X,\, \text{End}(E)\otimes V^*) \ = \ H^0(X,\, \text{End}(E)\otimes V\otimes K_X)^*
\end{equation}
$$
=\ \text{Hom}(H^0(X,\, \text{End}(E)\otimes V\otimes K_X),\, {\mathbb C})
$$
be the extension class for it; the first isomorphism is given by
Serre duality. We will describe its relationship with $\lambda$ in \eqref{e21}.

We have the homomorphism
$$
\beta\, :=\, \text{Id}_{\text{End}(E)}\otimes \phi^*\, :\, \text{End}(E)\otimes K_X\, \longrightarrow\,
\text{End}(E)\otimes V^*,
$$
where $\phi^*$ is the dual of the anchor map (see \eqref{e2}). Let
\begin{equation*}
\beta_*\ :\ H^1(X,\, \text{End}(E)\otimes K_X)\ \longrightarrow\ H^1(X,\, \text{End}(E)\otimes V^*)
\end{equation*}
be the homomorphism of cohomologies induced by $\beta$. From the commutative diagram in Proposition \ref{prop2}
it follows immediately that
\begin{equation}\label{e24}
\beta_* (\lambda) \ =\ \lambda_\phi,
\end{equation}
where $\lambda$ and $\lambda_\phi$ are the cohomology classes in \eqref{e21b} and \eqref{e22} respectively.

Tensoring $\phi\, :\, V\, \longrightarrow\, TX$ with ${\rm Id}_{K_X}$ we have
$$
\phi\otimes {\rm Id}_{K_X}\, :\, V\otimes K_X \, \longrightarrow\, (TX)\otimes K_X \,=\, {\mathcal O}_X.
$$
Tensoring $\phi\otimes {\rm Id}_{K_X}$ with $\text{Id}_{\text{End}(E)}$ we have
\begin{equation}\label{e25a}
\beta'\, :=\, \text{Id}_{\text{End}(E)}\otimes (\phi\otimes {\rm Id}_{K_X})\, :\, \text{End}(E)\otimes V\otimes K_X\, \longrightarrow\,
\text{End}(E).
\end{equation}
Let
\begin{equation}\label{e25}
\beta'_*\ :\ H^0(X,\, \text{End}(E)\otimes V\otimes K_X)\ \longrightarrow\ H^0(X,\, \text{End}(E))
\end{equation}
be the homomorphism of global sections induced by $\beta'$ in \eqref{e25a}.

{}From \eqref{e24} it follows immediately that the following diagram is commutative:
\begin{equation}\label{e26}
\begin{matrix}
H^0(X,\, \text{End}(E)\otimes V\otimes K_X) & \xrightarrow{\,\,\,\,\lambda_\phi\,\,\,} & {\mathbb C}\\
\,\,\,\, \Big\downarrow \beta'_* && \Big\Vert\\
H^0(X,\, \text{End}(E)) & \xrightarrow{\,\,\,\,\lambda\,\,\,} & {\mathbb C}
\end{matrix}
\end{equation}
where $\lambda$, $\lambda_\phi$ and $\beta'_*$ are constructed in \eqref{e21}, \eqref{e22} and \eqref{e25} respectively.
 
\begin{proposition}\label{prop3}
Assume that the following three statements hold:
\begin{enumerate}
\item $\phi\, \not=\, 0$,

\item the Lie algebroid $(V,\, \phi)$ is nonsplit (see Definition \ref{def1}), and

\item the holomorphic vector bundle $E$ on $X$ is indecomposable.
\end{enumerate}
Then $E$ admits a Lie algebroid connection for $(V,\, \phi)$.
\end{proposition}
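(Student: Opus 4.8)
The plan is to show that, under the three hypotheses, the extension class $\lambda_\phi$ from \eqref{e22} of the exact sequence \eqref{e13} vanishes. Hypothesis (1) ($\phi\neq 0$) places us in the setting where \eqref{e13} and Lemma \ref{lem3} are available. By Lemma \ref{lem3}, a Lie algebroid connection on $E$ is the same as a homomorphism $\delta:V\to\mathcal{A}_E$ splitting \eqref{e13} (that is, with $\rho\circ\delta=\mathrm{Id}_V$), and such a splitting exists precisely when the extension class $\lambda_\phi$ vanishes. Via the Serre duality identification in \eqref{e22}, the class $\lambda_\phi$ is a linear functional on $H^0(X,\mathrm{End}(E)\otimes V\otimes K_X)$, and the commutative square \eqref{e26} (a consequence of \eqref{e24}) expresses it as the composite $\lambda_\phi=\lambda\circ\beta'_*$, where $\beta'_*$ is induced by $\beta'$ in \eqref{e25a} and $\lambda$ is the Atiyah class of $E$, regarded again by Serre duality as a functional on $H^0(X,\mathrm{End}(E))$. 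Thus everything reduces to proving that $\lambda\big(\beta'_*(\eta)\big)=0$ for every $\eta\in H^0(X,\mathrm{End}(E)\otimes V\otimes K_X)$.

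First I would extract the effect of the nonsplit hypothesis. Since $\beta'=\mathrm{Id}_{\mathrm{End}(E)}\otimes(\phi\otimes\mathrm{Id}_{K_X})$ acts only on the $V\otimes K_X$ factor, taking the fibrewise trace on the $\mathrm{End}(E)$ factor gives
\[
\mathrm{tr}\big(\beta'_*(\eta)\big)=(\phi\otimes\mathrm{Id}_{K_X})\big(\mathrm{tr}(\eta)\big)\in H^0(X,(TX)\otimes K_X)=H^0(X,\mathcal{O}_X)=\mathbb{C},
\]
where $\mathrm{tr}(\eta)\in H^0(X,V\otimes K_X)$. As the trace map $H^0(X,\mathrm{End}(E)\otimes V\otimes K_X)\to H^0(X,V\otimes K_X)$ is surjective (it is split by $s\mapsto\frac{1}{\mathrm{rank}(E)}\mathrm{Id}_E\otimes s$), the sections $\mathrm{tr}(\eta)$ range over all of $H^0(X,V\otimes K_X)$. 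Now I would use the canonical identification $\mathrm{Hom}(TX,V)=H^0(X,V\otimes K_X)$, under which a homomorphism $\gamma:TX\to V$ corresponds to a section $\widetilde\gamma$, and $\phi\circ\gamma\in\mathrm{Hom}(TX,TX)=H^0(X,\mathcal{O}_X)=\mathbb{C}$ corresponds to $(\phi\otimes\mathrm{Id}_{K_X})(\widetilde\gamma)$, with $\mathrm{Id}_{TX}$ corresponding to $1$. Hence $(V,\phi)$ is split (Definition \ref{def1}) if and only if the map $(\phi\otimes\mathrm{Id}_{K_X}):H^0(X,V\otimes K_X)\to\mathbb{C}$ is nonzero. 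Since $(V,\phi)$ is nonsplit by hypothesis (2), this map is identically zero, and therefore $\mathrm{tr}(\beta'_*(\eta))=0$ for every $\eta$; that is, $\beta'_*(\eta)$ is a trace-free global endomorphism of $E$.

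It remains to prove that $\lambda$ annihilates every trace-free section of $\mathrm{End}(E)$, and this is the step where hypothesis (3) enters and which I expect to be the main obstacle. Because $E$ is indecomposable, $H^0(X,\mathrm{End}(E))$ is a finite-dimensional local $\mathbb{C}$-algebra by Atiyah's theorem, so it decomposes as $\mathbb{C}\cdot\mathrm{Id}_E\oplus\mathfrak{m}$ with $\mathfrak{m}$ the ideal of nilpotent endomorphisms; consequently a global endomorphism of $E$ is trace-free exactly when it is nilpotent. The required input is the fact underlying the Atiyah--Weil criterion that the Atiyah class pairs with $H^0(X,\mathrm{End}(E))$ only through the trace, equivalently that $\lambda$ vanishes on nilpotent global endomorphisms (so that in fact $\lambda(s)=\tfrac{\deg(E)}{\mathrm{rank}(E)}\,\mathrm{tr}(s)$); I would either cite \cite{At2} for this or verify it directly using the additivity of the Atiyah class along the subbundle filtration determined by such a nilpotent endomorphism. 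Granting it, $\lambda(\beta'_*(\eta))=0$ for all $\eta$, so $\lambda_\phi=\lambda\circ\beta'_*=0$, the sequence \eqref{e13} splits, and by Lemma \ref{lem3} the resulting splitting $\delta:V\to\mathcal{A}_E$ is a Lie algebroid connection on $E$ for $(V,\phi)$.
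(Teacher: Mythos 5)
Your proof is correct and follows essentially the same route as the paper's: both reduce via Lemma \ref{lem3} and the commutative square \eqref{e26} to showing $\lambda\circ\beta'_*\,=\,0$, both rest on the identity $\mathrm{trace}(\beta'_*(\cdot))\,=\,\Phi(\mathrm{trace}(\cdot))$ (the paper's \eqref{e32}) combined with the observation that splitness of $(V,\,\phi)$ is equivalent to the nonvanishing of this functional on $H^0(X,\, V\otimes K_X)$, and both invoke Atiyah's fact that for indecomposable $E$ the class $\lambda$ annihilates nilpotent (equivalently, trace-free) global endomorphisms. The only difference is organizational: you argue directly (nonsplit $\Rightarrow$ the functional vanishes $\Rightarrow$ every $\beta'_*(\eta)$ is nilpotent $\Rightarrow$ $\lambda_\phi=0$), whereas the paper argues by contradiction, rescaling a section with $\Phi(\mathcal B)\neq 0$ to manufacture a splitting of $\phi$.
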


\begin{proof}
In view of Lemma \ref{lem3}, the vector bundle $E$ admits a Lie algebroid connection if and only
if $\lambda_\phi\,=\, 0$ (see \eqref{e22} for $\lambda_\phi$). So to prove the proposition it suffices to show that
\begin{equation}\label{e27}
\lambda_\phi\ =\ 0.
\end{equation}

Atiyah proved in \cite{At2} that $\lambda\,:\,H^0(X,\, \text{End}(E))\, \longrightarrow\, {\mathbb C}$
in \eqref{e21} is the zero homomorphism if $\lambda (\text{Id}_E)\,=\, 0$. We briefly recall his proof.

Take any $A\, \in\, H^0(X,\, \text{End}(E))$. The coefficients of the characteristic polynomial of A are
constants because there are no nonconstant holomorphic functions on $X$. So the eigenvalues of $A$ are
constants over $X$. Note that $A$ has only one eigenvalue because $E$ is indecomposable, and multiple eigenvalues
would give a nontrivial eigenbundle decomposition of $E$. So
$$
A\ = \ \mu\cdot {\rm Id}_E + N,
$$
where $N\,\in\, H^0(X,\, \text{End}(E))$ is a nilpotent endomorphism of $E$ and $\mu$ is the unique eigenvalue of $A$.
Assume that $N\, \not=\, 0$. Using $N$ we get a holomorphic reduction of structure group
of $E$ to the group of lower-triangular invertible matrices (a Borel subgroup) of $\text{GL}(r,{\mathbb C})$, where
$r\,=\,\text{rank}(E)$. The endomorphism $N$ is strictly lower-triangular
with respect to this reduction of structure group. Since the trace pairing of a lower-triangular
$r\times r$ matrix and a strictly lower-triangular $r\times r$ matrix is zero, we have $\lambda (N)\,=\, 0$. Hence
we conclude that $\lambda\,=\, 0$ if $\lambda ({\rm Id}_E)\,=\, 0$.

To prove \eqref{e27} by contradiction, assume that $\lambda_\phi\, \not=\, 0$. So from \eqref{e26} we conclude that
there is a section
$$
B\ \in\ H^0(X,\, \text{End}(E)\otimes V\otimes K_X)
$$
such that
\begin{equation}\label{e28}
\lambda\circ \beta'_* (B)\ \not= \ 0.
\end{equation}
Consider $\beta'_*(B)\, \in\, H^0(X,\, \text{End}(E))$. It was observed above (recalled from \cite{At2}) that
$\beta'_*(B)$ has exactly one eigenvalue, because $E$ is indecomposable. This eigenvalue is nonzero, because
otherwise $\beta'_*(B)$ would be nilpotent, and $\lambda$ evaluated on a nilpotent endomorphism of $E$ is zero
(this was also observed above and recalled from \cite{At2}), which would contradict \eqref{e28}. Since
$\beta'_*(B)$ has exactly one eigenvalue, and the eigenvalue is nonzero, it follows that
\begin{equation}\label{e29}
\text{trace}(\beta'_* (B))\ \not= \ 0.
\end{equation}

For any homomorphism
$\varpi\, :\, TX\, \longrightarrow\, V$, consider the composition of maps
$$
TX\, \xrightarrow{\,\,\,\, \varpi\,\,\,\,} \, V \, \xrightarrow{\,\,\,\, \phi\,\,\,\,} \, TX.
$$
It coincides with $c\cdot \text{Id}_{TX}$ for some $c\, \in\, {\mathbb C}$. Let
\begin{equation}\label{e31}
\Phi\ :\ H^0(X,\, V\otimes K_X) \ \longrightarrow\ {\mathbb C}
\end{equation}
be the homomorphism that sends any $\varpi\, :\, TX\, \longrightarrow\, V$
to the above complex number $c\,\in\, {\mathbb C}$ given by $\phi\circ\varpi$.

For $B$ in \eqref{e28}, we have
\begin{equation}\label{e30}
{\mathcal B}\ :=\ {\rm trace}(B) \ \in\ H^0(X,\, V\otimes K_X),
\end{equation}
where ``trace'' denotes the trace map $\text{End}(E)\, \longrightarrow\, {\mathcal O}_X$ tensored with
${\rm Id}_{V\otimes K_X}$. From the construction of the maps $\beta'$ in \eqref{e25a} and $\Phi$ in \eqref{e31} we conclude that
\begin{equation}\label{e32}
\Phi({\mathcal B})\ =\ \text{trace}(\beta'_* (B)),
\end{equation}
where $\mathcal B$ and $\beta'_*$ are constructed in \eqref{e30} and \eqref{e25} respectively. Now
\eqref{e29} and \eqref{e32} together imply that
\begin{equation}\label{e33}
\Phi({\mathcal B})\ \not=\ 0.
\end{equation}
Recall from the construction of $\Phi$ that the composition of maps
$$
TX\, \xrightarrow{\,\,\,\, {\mathcal B}\,\,\,\,} \, V \, \xrightarrow{\,\,\,\, \phi\,\,\,\,} \, TX
$$
coincides with multiplication by $\Phi({\mathcal B})\,\in\, {\mathbb C}$. So from \eqref{e33} it follows that the homomorphism
$$
\frac{1}{\Phi({\mathcal B})}{\mathcal B}\ :\ TX \ \longrightarrow\ V
$$
has the property that
\begin{equation}\label{e34}
\phi\circ \left(\frac{1}{\Phi({\mathcal B})}{\mathcal B}\right)\,=\, {\rm Id}_{TX}.
\end{equation}

But \eqref{e34} contradicts the given condition that the Lie algebroid $(V,\, \phi)$ is nonsplit. Hence
\eqref{e27} is proved. This completes the proof of the proposition.
\end{proof}

\begin{corollary}\label{cor2}
Let $(E,\,\phi)$ be a Lie algebroid on $X$. Assume that the following two statements hold:
\begin{enumerate}
\item $\phi\, \not=\, 0$, and

\item $(V,\, \phi)$ is nonsplit.
\end{enumerate}
Then any holomorphic vector bundle $E$ on $X$ admits a Lie algebroid connection for $(V,\, \phi)$.
\end{corollary}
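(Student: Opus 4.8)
The plan is to reduce the general statement to the indecomposable case, which is precisely the content of Proposition \ref{prop3}. First I would invoke the structure theory for holomorphic vector bundles on $X$ recalled in Section \ref{sec2}, and fix a holomorphic decomposition
$$
E\ =\ \bigoplus_{i=1}^\ell E_i
$$
into indecomposable holomorphic vector bundles. Each summand $E_i$ is then an indecomposable component of $E$: it is indecomposable, and $E\,=\,E_i\oplus\bigl(\bigoplus_{j\neq i}E_j\bigr)$ exhibits a complement. Since the hypotheses $\phi\,\neq\,0$ and ``$(V,\,\phi)$ nonsplit'' are properties of the Lie algebroid alone and are unchanged, Proposition \ref{prop3} applies to every $E_i$ and produces a Lie algebroid connection $D_i\,:\,E_i\,\longrightarrow\,E_i\otimes V^*$ for $(V,\,\phi)$.

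Next I would assemble these into a connection on the total bundle. Because $E\otimes V^*\,=\,\bigoplus_{i=1}^\ell\bigl(E_i\otimes V^*\bigr)$, the operators $D_i$ combine to a single first order holomorphic differential operator $D\,=\,\bigoplus_{i=1}^\ell D_i\,:\,E\,\longrightarrow\,E\otimes V^*$. The only point to verify is the Leibniz identity \eqref{e-4}. Writing a local section of $E$ as $s\,=\,\sum_i s_i$ with $s_i$ a local section of $E_i$, and using the corresponding identity for each $D_i$, one computes
$$
D(fs)\ =\ \sum_i D_i(f s_i)\ =\ \sum_i\bigl(f D_i(s_i)+s_i\otimes\phi^*(df)\bigr)\ =\ fD(s)+s\otimes\phi^*(df),
$$
where the key observation is that the inhomogeneous term $s_i\otimes\phi^*(df)$ involves the same $\phi^*(df)$ for every summand, so the sum collapses to $s\otimes\phi^*(df)$. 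Hence $D$ is a Lie algebroid connection on $E$ for $(V,\,\phi)$.

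I do not expect any genuine obstacle: all the real work is carried out in Proposition \ref{prop3}, and the passage from indecomposable bundles to arbitrary ones is the routine direct-sum construction, valid precisely because a Lie algebroid connection is defined summand-by-summand and the Leibniz correction term respects direct sums. The hypothesis $\phi\,\neq\,0$ is exactly what permits the appeal to Proposition \ref{prop3}; the complementary degenerate case $\phi\,=\,0$ is instead covered by Lemma \ref{lem2}, so together they yield the first part of Theorem \ref{thm-i}.
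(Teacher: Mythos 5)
Your proposal is correct and follows exactly the paper's own proof: decompose $E$ into indecomposable summands, apply Proposition \ref{prop3} to each, and take the direct sum of the resulting connections. Your explicit verification of the Leibniz identity \eqref{e-4} for the direct-sum operator is a detail the paper leaves implicit, but the argument is the same.
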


\begin{proof}
Express $E$ as a direct sum of indecomposable vector bundles
$$
E\ = \ \bigoplus_{i=1}^\ell E_i.
$$
Each $E_i$, $1\,\leq\, i\, \leq\, \ell$, admits a Lie algebroid connection for $(V,\, \phi)$
by Proposition \ref{prop3}. If $D_i$ is a Lie algebroid connection on $E_i$, then
$$
D\ = \ \bigoplus_{i=1}^\ell D_i
$$
is a Lie algebroid connection on $E$ for $(V,\, \phi)$.
\end{proof}

\begin{theorem}\label{thm1}
Let $(E,\,\phi)$ be a nonsplit Lie algebroid on $X$. Then any holomorphic vector bundle $E$ on $X$ admits a
Lie algebroid connection for $(V,\, \phi)$.
\end{theorem}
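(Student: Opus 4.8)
The plan is to reduce this statement entirely to the two cases already settled earlier in the paper, via a dichotomy on the anchor map. The crucial preliminary observation is that for a nonsplit Lie algebroid $(V,\,\phi)$ on the compact Riemann surface $X$, the anchor $\phi$ is either identically zero or nonzero, and each alternative has already been handled. Note in particular that whenever $\phi\,=\,0$ the Lie algebroid is automatically nonsplit: a splitting would require a homomorphism $\gamma\,:\,TX\,\longrightarrow\,V$ with $\phi\circ\gamma\,=\,{\rm Id}_{TX}$, which is impossible since $\phi\circ\gamma\,=\,0\,\neq\,{\rm Id}_{TX}$ (here $TX$ is a nonzero line bundle on $X$). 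Thus the hypothesis ``$(V,\,\phi)$ nonsplit'' splits cleanly into the subcase $\phi\,=\,0$ and the subcase $\phi\,\neq\,0$ together with nonsplitness.

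First I would dispose of the case $\phi\,=\,0$. Here Lemma \ref{lem2} applies directly and gives that every holomorphic vector bundle $E$ on $X$ admits an integrable Lie algebroid connection for $(V,\,\phi)$; in fact the zero section $\theta\,=\,0\,\in\,H^0(X,\,\text{End}(E)\otimes V^*)$ already furnishes such a connection, since a Lie algebroid connection for a zero anchor is nothing but such a section. In particular $E$ admits a Lie algebroid connection, which is all that is required.

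Next I would treat the remaining case $\phi\,\neq\,0$. Since $(V,\,\phi)$ is by assumption nonsplit, both hypotheses of Corollary \ref{cor2} are satisfied, and that corollary asserts precisely that every holomorphic vector bundle $E$ on $X$ admits a Lie algebroid connection for $(V,\,\phi)$. Combining the two subcases, every nonsplit Lie algebroid admits a Lie algebroid connection on an arbitrary $E$, which is the assertion of the theorem.

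I do not expect any genuine obstacle at this final step: the theorem is an assembly of the previously established pieces, and all the substantive work has already been carried out. The analytic heart of the argument lies in Proposition \ref{prop3}, where the Atiyah--Weil-type extension class $\lambda_\phi$ attached to the exact sequence \eqref{e13} is shown to vanish for indecomposable $E$ by a characteristic-polynomial and trace-pairing argument, the vanishing being forced precisely by nonsplitness through the contradiction in \eqref{e34}; Corollary \ref{cor2} then propagates this to arbitrary $E$ by taking the direct sum of connections over an indecomposable decomposition. The only point meriting explicit comment in the write-up is the automatic nonsplitness when $\phi\,=\,0$, which justifies invoking Lemma \ref{lem2} rather than Corollary \ref{cor2} in that subcase.
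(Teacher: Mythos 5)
Your proposal is correct and follows exactly the paper's own proof: the case $\phi=0$ is handled by Lemma \ref{lem2} and the case $\phi\neq 0$ by Corollary \ref{cor2} (which rests on Proposition \ref{prop3}). Your extra remark that $\phi=0$ automatically forces nonsplitness is a nice clarification, though not logically needed, since Lemma \ref{lem2} applies regardless of splitness.
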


\begin{proof}
If $\phi\, =\, 0$, then $E$ admits an integrable Lie algebroid connection by Lemma \ref{lem2}.

If $\phi\, \not=\, 0$, then $E$ admits a Lie algebroid connection by Corollary \ref{cor2}.
\end{proof}

\section{Example of Lie algebroid connections}\label{sec5}

Let $F$ be a holomorphic vector bundle on $X$. Set the underlying vector bundle $V$ for the Lie
algebroid to be the Atiyah bundle ${\rm At}(F)$ (see \eqref{e15}), and set the anchor map
$\phi\, :\, {\rm At}(F)\, \longrightarrow\, TX$ to be the natural projection as in \eqref{e15}.
Then the Lie algebroid is split if and only if $F$ admits an usual holomorphic connection \cite{At2}, \cite{We}.

Proposition \ref{prop1} gives the following:

\begin{corollary}\label{cor3}
Assume that $F$ admits an usual holomorphic connection, and consider the above Lie algebroid
$({\rm At}(F),\,\phi)$. Let $E$ be a holomorphic vector bundle on $X$.
Then the following three statements are equivalent:
\begin{enumerate}
\item Each indecomposable component of $E$ is of degree zero.

\item $E$ admits a Lie algebroid connection for $({\rm At}(F),\,\phi)$.

\item $E$ admits an integrable Lie algebroid connection for $({\rm At}(F),\,\phi)$.
\end{enumerate}
\end{corollary}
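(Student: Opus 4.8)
The plan is to reduce the statement directly to Proposition \ref{prop1}, whose hypothesis requires the Lie algebroid to be split. So the only thing I really need to establish is that the Lie algebroid $({\rm At}(F),\, \phi)$ under consideration is split precisely because $F$ is assumed to admit a holomorphic connection; once that is in place, the three listed equivalences are nothing but statements (1)--(3) of Proposition \ref{prop1} specialized to $(V,\, \phi)\,=\,({\rm At}(F),\, \phi)$.

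First I would recall, from the discussion following \eqref{e15}, that a holomorphic connection on $F$ is the same datum as a holomorphic homomorphism $h\,:\, TX\, \longrightarrow\, {\rm At}(F)$ satisfying $\sigma_0\circ h\,=\, {\rm Id}_{TX}$; equivalently, it is a holomorphic splitting of the Atiyah exact sequence \eqref{e15} for $F$. Next I would observe that, by the construction in Section \ref{sec5}, the anchor map of the Lie algebroid $({\rm At}(F),\, \phi)$ is by definition exactly the projection $\sigma_0\,:\, {\rm At}(F)\, \longrightarrow\, TX$ appearing in \eqref{e15}, so that $\phi\,=\,\sigma_0$. Hence a splitting $h$ of \eqref{e15} is precisely an ${\mathcal O}_X$--linear homomorphism $\gamma\,:=\,h\,:\, TX\, \longrightarrow\, V\,=\,{\rm At}(F)$ with $\phi\circ\gamma\,=\,{\rm Id}_{TX}$, which by Definition \ref{def1} is exactly the condition that $({\rm At}(F),\, \phi)$ be a split Lie algebroid. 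Combining these two observations, the hypothesis that $F$ admits a holomorphic connection is equivalent to $({\rm At}(F),\, \phi)$ being split.

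With the Lie algebroid now known to be split, I would simply invoke Proposition \ref{prop1} for $(V,\, \phi)\,=\,({\rm At}(F),\, \phi)$, whose statements (1), (2), (3) coincide verbatim with those of the corollary, thereby concluding the proof. There is essentially no computational obstacle: the one point requiring care is the identification of ``the Atiyah sequence of $F$ splits'' with ``$({\rm At}(F),\, \phi)$ is split in the sense of Definition \ref{def1}'', and this is purely a matter of matching definitions, since the anchor $\phi$ literally is the symbol map $\sigma_0$ and Atiyah's characterization of a holomorphic connection is exactly a section of $\sigma_0$.
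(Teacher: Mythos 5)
Your proposal is correct and follows exactly the paper's own argument: the paper likewise observes that, since a holomorphic connection on $F$ is a splitting $h\,:\,TX\,\longrightarrow\,{\rm At}(F)$ of the Atiyah sequence and the anchor $\phi$ of $({\rm At}(F),\,\phi)$ is precisely the projection $\sigma_0$ in \eqref{e15}, the hypothesis makes the Lie algebroid split, and then invokes Proposition \ref{prop1}. Your write-up merely makes explicit the definitional matching (connection $=$ section of $\sigma_0$ $=$ splitting $\gamma$ with $\phi\circ\gamma\,=\,{\rm Id}_{TX}$) that the paper treats as already established in the paragraph preceding the corollary.
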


\begin{proof}
Since $F$ admits an usual holomorphic connection, the Lie algebroid $({\rm At}(F),\,\phi)$ is split.
Hence Proposition \ref{prop1} shows that the above three statements are equivalent.
\end{proof}

A special case of Corollary \ref{cor3}: Set $F$ to be an indecomposable holomorphic vector bundle
of degree zero.

Theorem \ref{thm1} gives the following:

\begin{corollary}\label{cor4}
Assume that $F$ does not admit any usual holomorphic connection, and consider the above Lie algebroid
$({\rm At}(F),\,\phi)$ associated to $F$. Then any holomorphic vector bundle $E$ on $X$ admits a
Lie algebroid connection for $({\rm At}(F),\,\phi)$.
\end{corollary}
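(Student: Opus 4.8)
The plan is to reduce the statement directly to Theorem \ref{thm1} by showing that the hypothesis on $F$ forces the Lie algebroid $({\rm At}(F),\, \phi)$ to be nonsplit. Recall that the anchor $\phi\,:\,{\rm At}(F)\,\longrightarrow\, TX$ is the projection $\sigma_0$ appearing in the Atiyah exact sequence \eqref{e15}. By Definition \ref{def1}, the Lie algebroid $({\rm At}(F),\, \phi)$ is split precisely when there is an ${\mathcal O}_X$--linear homomorphism $\gamma\,:\, TX\,\longrightarrow\, {\rm At}(F)$ with $\phi\circ\gamma\,=\, \sigma_0\circ\gamma\,=\, {\rm Id}_{TX}$.

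First I would observe that such a $\gamma$ is exactly a holomorphic connection on $F$. Indeed, as recalled just after \eqref{e15}, a holomorphic connection on $F$ is an ${\mathcal O}_X$--linear splitting $h\,:\, TX\,\longrightarrow\, {\rm At}(F)$ of the Atiyah exact sequence, that is, a homomorphism satisfying $\sigma_0\circ h\,=\,{\rm Id}_{TX}$ (Atiyah's reformulation \cite{At2}). Since Definition \ref{def1} only requires $\gamma$ to be ${\mathcal O}_X$--linear, and imposes no bracket-compatibility, the two conditions coincide verbatim. Hence the existence of a split structure on $({\rm At}(F),\, \phi)$ is equivalent to the existence of a holomorphic connection on $F$; this is the equivalence already noted before Corollary \ref{cor3}.

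Taking the contrapositive, the assumption that $F$ admits no holomorphic connection gives that $({\rm At}(F),\, \phi)$ is nonsplit. Theorem \ref{thm1} then applies directly and produces a Lie algebroid connection on every holomorphic vector bundle $E$ for $({\rm At}(F),\, \phi)$, which is the desired conclusion. The genuine content of the argument is thus imported through Theorem \ref{thm1} (which itself rests on Proposition \ref{prop3}); the single point internal to the corollary is the identification of the split condition of Definition \ref{def1} with the existence of a connection on $F$. I therefore expect no substantive obstacle here: the only step requiring any care is verifying that the ${\mathcal O}_X$--linear splitting of Definition \ref{def1} is literally a holomorphic connection on $F$, with no further bracket-compatibility to impose, and this is immediate.
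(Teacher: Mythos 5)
Your proposal is correct and follows exactly the paper's route: the paper records (just before Corollary \ref{cor3}) that $({\rm At}(F),\,\phi)$ is split if and only if $F$ admits a holomorphic connection---precisely because Definition \ref{def1} asks only for an ${\mathcal O}_X$--linear splitting of the anchor, which is Atiyah's characterization of a connection---and then states Corollary \ref{cor4} as an immediate consequence of Theorem \ref{thm1}. Nothing further is needed.
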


A special case of Corollary \ref{cor4}: Set $F$ to be a holomorphic vector bundle
of nonzero degree.

Let $E$ be a holomorphic vector bundle on $X$. Consider its Atiyah exact sequence as in
\eqref{e15}:
\begin{equation}\label{e41}
0\, \longrightarrow\, \text{End}(E) \, \longrightarrow\, {\rm At}(E)
\, \stackrel{\phi}{\longrightarrow}\, TX \, \longrightarrow\, 0.
\end{equation}
We have the Lie algebroid $(\text{At}(E),\, \phi)$. Then it can be shown that $E$ has a tautological Lie
algebroid connection for the Lie algebroid $(\text{At}(E),\, \phi)$; furthermore, the tautological Lie
algebroid connection on $E$ is integrable. To see the tautological Lie algebroid connection on $E$, 
first note that the vector bundle ${\mathcal A}_E$ in \eqref{e13} is the subbundle of $\text{At}(E)
\oplus\text{At}(E)$ consisting of all $(v,\, w)\, \in\, \text{At}(E)\oplus \text{At}(E)$ such that
$\phi(v)\,=\, \phi(w)$, where $\phi$ is the projection in \eqref{e41}. So the exact sequence in
\eqref{e13} becomes
\begin{equation}\label{e42}
0\, \longrightarrow\, \text{End}(E) \, \longrightarrow\, {\mathcal A}_E
\, \stackrel{p_2}{\longrightarrow}\, \text{At}(E) \, \longrightarrow\, 0,
\end{equation}
where $p_2$ is the restriction to ${\mathcal A}_E\, \subset\, \text{At}(E)\oplus \text{At}(E)$
of the natural projection $\text{At}(E)\oplus \text{At}(E)\, \longrightarrow\,
\text{At}(E)$ to the second factor. The projection $p_2$ in \eqref{e42} has a natural holomorphic
splitting $\text{At}(E)\, \longrightarrow\, {\mathcal A}_E$ defined by $v\, \longmapsto\, (v,\, v)$.
This splitting defines a Lie algebroid connection on $E$ (for $(\text{At}(E),\, \phi)$) by Lemma
\ref{lem3}. From Lemma \ref{ll} it follows that this Lie algebroid connection on $E$ is integrable.

Let $E$ be an indecomposable holomorphic vector bundle on $X$.
As before, let $$\text{End}^0(E)\, \subset\, \text{End}(E)$$ be the subbundle of endomorphisms of
trace zero. Quotienting \eqref{e41} by $\text{End}^0(E)$ we get the exact sequence
\begin{equation}\label{e43}
0\, \longrightarrow\, \text{End}(E)/\text{End}^0(E)\,=\, {\mathcal O}_X
\, \longrightarrow\, V_E\,:=\, {\rm At}(E)/\text{End}^0(E)
\, \stackrel{\widehat{\phi}}{\longrightarrow}\, TX \, \longrightarrow\, 0,
\end{equation}
where $\widehat{\phi}$ is induced by $\phi$ in \eqref{e41}. So $(V_E,\, \widehat{\phi})$ in
\eqref{e43} is a Lie algebroid on $X$. Now consider the exact sequence
in \eqref{e13} for $E$ (for the Lie algebroid $(V_E,\, \widehat{\phi})$):
\begin{equation}\label{e43a}
0\, \longrightarrow\, \text{End}(E) \,\longrightarrow\, {\mathcal A}_E
\, \xrightarrow{\,\,\,\rho\,\,\,}\, V_E \, \longrightarrow\, 0.
\end{equation}
{}From the commutative diagram in Proposition \ref{prop2} we conclude the following:
The vector bundle ${\mathcal A}_E$ in \eqref{e43a} is the subbundle of
$\text{At}(E)\oplus V_E$ consisting of all $(v,\, w)\, \in\, \text{At}(E)\oplus V_E$ such
that $\phi(v)\,=\, \widehat{\phi}(w)$, where $\phi$ and $\widehat\phi$ are the projections in
\eqref{e41} and \eqref{e43} respectively; the homomorphism $\rho$ in \eqref{e43a} is the
restriction, to ${\mathcal A}_E$, of the natural projection $\text{At}(E)\oplus V_E\,
\longrightarrow\, V_E$.

We also have ${\mathcal O}_X\, \subset\, \text{End}(E)$ (see \eqref{e16}).
Quotienting \eqref{e41} by ${\mathcal O}_X$ we get the exact sequence
\begin{equation}\label{e44}
0\, \longrightarrow\, \text{End}(E)/{\mathcal O}_X \,=\, \text{End}^0(E) 
\, \longrightarrow\, \text{At}^0(E)\,:=\, {\rm At}(E)/{\mathcal O}_X
\, \stackrel{\phi'}{\longrightarrow}\, TX \, \longrightarrow\, 0,
\end{equation}
where $\phi'$ is given by $\phi$ in \eqref{e41}. We recall that a holomorphic projective connection
on $E$ is given by a holomorphic homomorphism $$\delta\, :\, TX\, \longrightarrow\, \text{At}^0(E)$$
such that $\phi'\circ\delta \,=\, \text{Id}_{TX}$, where $\phi'$ is the projection in \eqref{e44}.

Since the vector bundle $E$ is indecomposable, it admits
a holomorphic projective connection \cite{AB}. Let
\begin{equation}\label{e45}
\delta\, :\, TX\, \longrightarrow\, \text{At}^0(E)
\end{equation}
be a homomorphism giving a holomorphic projective connection on $E$. Let
$$
q\, :\, \text{At}(E)\, \longrightarrow\, \text{At}^0(E)\,:=\, {\rm At}(E)/{\mathcal O}_X
$$
be the quotient map. Consider the inverse image
$$
F_\delta \ :=\ q^{-1}(\delta(TX))\ \subset \ \text{At}(E),
$$
where $\delta$ is the homomorphism in \eqref{e45}. Note that the following composition of maps
$$
F_\delta \, \hookrightarrow\, \text{At}(E)\, \longrightarrow\, 
{\rm At}(E)/\text{End}^0(E) \, =: \, V_E
$$
is an isomorphism (the vector bundle $V_E$ is constructed in \eqref{e43}); let
\begin{equation}\label{e46}
\Psi\, :\, F_\delta \, \longrightarrow\, V_E
\end{equation}
be the isomorphism given by this composition of maps.

Consider the homomorphism
\begin{equation}\label{eP}
\Phi\ :\ V_E \ \longrightarrow\ \text{At}(E)\oplus V_E
\end{equation}
defined by $v\, \longmapsto\, (\Psi^{-1}(v),\, v)$, where $\Psi$ is the isomorphism in
\eqref{e46} (note that $F_\delta \, \subset\, \text{At}(E)$, so $\Psi^{-1}(v)\, \in\,
\text{At}(E)$). The image of $\Phi$ in \eqref{eP} evidently lies in the subbundle 
${\mathcal A}_E\, \subset\, \text{At}(E)\oplus V_E$, where ${\mathcal A}_E$ is the vector bundle
in \eqref{e43a} (it was shown earlier that ${\mathcal A}_E$ is
a subbundle of $\text{At}(E)\oplus V_E$). Now it is easy to see that the map
$\Phi\, :\, V_E \, \longrightarrow\, {\mathcal A}_E$ satisfies the condition
$$
\rho\circ \Phi\ =\ \text{Id}_{V_E},
$$
where $\rho$ is the projection in \eqref{e43a}.
In view of Lemma \ref{lem3}, this implies that $\Phi$ gives a Lie algebroid connection $E$
for the Lie algebroid $(V_E,\, \widehat{\phi})$ in \eqref{e43}.

Since the holomorphic projective connection on $E$ given by $\delta$ in \eqref{e45} is integrable,
it follows that the above Lie algebroid connection on $E$
(for the Lie algebroid $(V_E,\, \widehat{\phi})$) is also integrable.

\section*{Acknowledgements}

D.A. was supported by grants PID2022-142024NB-I00 and RED2022-134463-T funded by 
MCIN/AEI/10.13039/501100011033. A.S. is partially supported by SERB SRG Grant SRG/2023/001006.
I.B. is partially supported by a J. C. Bose Fellowship (JBR/2023/000003).

\section*{Data availability}

No datasets were generated, used or analyzed in this paper.

\end{document}